\newtheorem{Thm}{Theorem}[section]
\newtheorem{Lem}[Thm]{Lemma}
\newtheorem{Prop}[Thm]{Proposition}
\newtheorem{Claim}[Thm]{Claim}
\newtheorem{Def}[Thm]{Definition}
\newenvironment{altproof}[1]   
{\noindent
{\bf Proof of {#1}}.}   
{\nopagebreak\mbox{}\hfill $\Box$\par\addvspace{0.5cm}}
\newcommand{\R}{\mathbb{R}}
\newcommand{\ut}{\tilde{u}} 
\newcommand{\utp}{\tilde{u}^+}
\newcommand{\utm}{\tilde{u}^-}
\newcommand{\uh}{\hat{u}}
\newcommand{\uhp}{\hat{u}^+}
\newcommand{\uhm}{\hat{u}^-}
\newcommand{\ub}{\bar{u}}
\newcommand{\bu}{\underline{u}}
\newcommand{\vtp}{\tilde{v}^+}
\newcommand{\vtm}{\tilde{v}^-}
\newcommand{\vhp}{\hat{v}^+}
\newcommand{\utpn}{\tilde{u}^+_n}
\newcommand{\utmn}{\tilde{u}^-_n}
\newcommand{\uhpn}{\hat{u}^+_n}
\newcommand{\uhmn}{\hat{u}^-_n}
\newcommand{\ubn}{\bar{u}_n}
\newcommand{\bun}{\underline{u}_n}
\newcommand{\ot}{\tilde{\omega}}
\newcommand{\ohat}{\hat{\omega}}
\newcommand{\ob}{\bar{\omega}}
\newcommand{\rt}{\tilde{r}}
\newcommand{\st}{\tilde{s}}
\newcommand{\that}{\hat{t}}
\newcommand{\hozo}{H^1_0(\Omega)}
\newcommand{\hoz}{\underline{H}}
\newcommand{\iu}{I_\mu}
\newcommand{\ap}{a^+}
\newcommand{\am}{a^-}
\newcommand{\nmu}{{\cal{N}}_\mu}
\newcommand{\op}{\frac{1}{p}}
\newcommand{\oh}{\frac{1}{2}}
\newcommand{\ott}{\frac{1}{\theta}}
\newcommand{\mm}{\cal{M}}
\newcommand{\nutp}{\left(\int\ap(\utp)^2\right)^{1/2}}
\newcommand{\nutm}{\left(\int\ap(\utm)^2\right)^{1/2}}
\newcommand{\nuhp}{\left(\int\ap(\uhp)^2\right)^{1/2}}
\newcommand{\jtp}{\tilde{J}_\mu^+}
\newcommand{\jtm}{\tilde{J}_\mu^-}
\newcommand{\jhp}{\hat{J}_\mu^+}
\newcommand{\g}{\check{f}}
\newcommand{\x}{\, \cdot\, ,}
\newcommand{\xx}{x,}
\newcommand{\xxx}{\, \cdot\, }
\newcommand{\str}{\underline{S}}
\newcommand{\ztp}{\tilde{z}^+}
\newcommand{\ztm}{\tilde{z}^-}
\newcommand{\zhp}{\hat{z}^+}
\newcommand{\Lt}{\Lambda}
\begin{document}
\begin{center}
{\Large\bf 
Multibump nodal solutions for an indefinite nonhomogeneous elliptic problem\footnote{2000 
Mathematics Subject Classification: 35J65 (35J20)\\
\indent Keywords: Multibump solutions, Nehari manifold, sign-changing solutions, elliptic equations,
nonhomogeneous nonlinearities}}\\ 
\ \\
Pedro M.\ Girão\footnote{Email: pgirao@math.ist.utl.pt. Partially supported by the Center for Mathematical
Analysis, Geometry and Dynamical Systems through FCT Program POCTI/FEDER.} and José Maria Gomes\footnote{Email:
jgomes@math.ist.utl.pt. Supported by FCT
grant SFRH/BPD/29098/2006.}
\\
Instituto Superior Técnico\\
Av.\ Rovisco Pais\\
1049-001 Lisbon, Portugal
\end{center}

\begin{center}
{\bf Abstract}\\
\end{center}
\noindent We construct multibump nodal solutions of the elliptic equation
$$
-\Delta u=\ap[\lambda u+ f(\x u)]-\mu\am g(\x u)
$$
in $\hozo$, when $\mu$ is large, under appropriate assumptions,
for $f$ superlinear and subcritical and such that
the eigenvalues of the associated linearized operator on $H^1_0(\{x\in\Omega:\: a(x)>0\})$
at zero,
$u\longmapsto u-\lambda(-\Delta)^{-1}(\ap u)$, 
are positive.
The solutions are of least energy in some Nehari-type set
defined by imposing suitable conditions on orthogonal
components of functions in $\hozo$.

\section{Introduction}

We are concerned with multibump solutions of the semilinear
Dirichlet problem
\begin{equation}
\label{zero}
\left\{\begin{array}{ll}
-\Delta u=\ap[\lambda u+ f(\x u)]-\mu\am g(\x u)&\mbox{in}\ \Omega,\\
u=0&\mbox{on}\ \partial\Omega.
\end{array}\right.
\end{equation}
We state our assumptions. The set $\Omega$ is an
open and bounded Lipschitz domain in $\R^N$, $N\geq 1$.
The function $a$ belongs to $C(\Omega)\cap L^\infty(\Omega)$ and
$\ap$ denotes $\max\{a,0\}$, $\am=\ap-a$ as usual.
The set 
$$
\Omega^+:=\{x\in\Omega:\: a(x)>0\}
$$
has, say, three components,
$$
\Omega^+=\ot\cup\ohat\cup\ob,
$$
also Lipschitz, and
$$
\Omega^-:=\{x\in\Omega:\: a(x)<0\}=\Omega\setminus\overline{\Omega^+}.
$$
The value $\mu$ is a nonnegative parameter.
Let $p$ be a superquadratic and subcritical exponent,
$2<p<2^*$, where $2^*=+\infty$ if $N=1$ or $N=2$,
and $2^*=2N/(N-2)$ if $N\geq 3$.
The functions $f$ and $g$, defined on $\Omega\times\R$, satisfy $f\in C(\Omega^+\times\R)$, $g\in C(\Omega^-\times\R)$,
$f$ is differentiable with respect to the second variable $u$ in $\Omega^+\times\R$ and
$f^\prime:=\partial f/\partial u\in C(\Omega^+\times\R)$.
Furthermore,
denoting by $F(\x u):=\int_0^uf(\x s)\,ds$, and $G(\x u):=\int_0^ug(\x s)\,ds$,
the functions $f$ and $g$ satisfy the following hypotheses:
\begin{enumerate}
 \item[(a)] $\displaystyle\exists_{C_0>0}\ \forall_{u}\ \ |f^\prime(\x u)|\leq C_0(1+|u|^{p-2})$,\\
$\displaystyle\exists_{C_0'>0}\ \forall_{u}\ \ |g(\x u)|\,\,\leq C_0'(1+|u|^{p-1})$.
 \item[(b)] $\displaystyle\exists_{\theta>2}\ \forall_{u\neq 0}\ \ 0<\theta F(\x u)\leq uf(\x u)$,\\
$\displaystyle\exists_{\vartheta>1}\ \forall_{u\neq 0}\ \ 0<\vartheta G(\x u)\leq ug(\x u)$.
 \item[(c)] $\displaystyle\forall_{u\neq 0}\ \forall_{x\in\Omega^+}\ \ \frac{f(\xx u)}{u}<f^\prime(\xx u)$.
 \item[(d)] 
Let $\tilde{\lambda}_1$, $\hat{\lambda}_1$, $\bar{\lambda}_1$ be the
first eigenvalue of $-\Delta u=l\ap u$
on $H^1_0(\ot)$, $H^1_0(\ohat)$, $H^1_0(\ob)$, respectively.
The parameter $\lambda$ satisfies
$$
0\leq\lambda<\Lambda_1:=\min\left\{\tilde{\lambda}_1,\hat{\lambda}_1,\bar{\lambda}_1\right\}.
$$
\end{enumerate}

From (b) it follows that $f(\x 0)\equiv f'(\x 0)\equiv g(\x 0)\equiv 0$.
Hypothesis (d) is equivalent to saying 
the parameter $\lambda$ is nonnegative and smaller than 
the maximum eigenvalue of the map from
$H^1_0(\Omega^+)$ to $H^1_0(\Omega^+)$
defined by
$u\mapsto (-\Delta)^{-1}(\ap u)$;
here $(-\Delta)^{-1}$ denotes the
inverse of the Dirichlet Laplacian on $H^1_0(\Omega^+)$.
An example of functions $f$ and $g$ satisfying our assumptions are
\begin{eqnarray*}
f(\x u)&=&a_1(\xxx)|u|^{p_1-2}u+a_2(\xxx)|u|^{p_2-2}u,\\
g(\x u)&=&b_1(\xxx)|u|^{q_1-2}u\,+b_2(\xxx)|u|^{q_2-2}u,
\end{eqnarray*}
with $2<p_1, p_2<2^*$, $1<q_1, q_2< 2^*$, $a_1, a_2\in C(\Omega^+)\cap L^\infty(\Omega^+)$,
$b_1, b_2\in C(\Omega^-)\cap L^\infty(\Omega^-)$.
In fact, $p_1, p_2, q_1, q_2$ might even be continuous functions of 
the space variable, with $p_1, p_2$ bounded away from 2 and $2^*$, and
$q_1, q_2$ bounded away from 1 and $2^*$. Our results would still hold if
we were to impose less on the function $g$, namely that it satisfied 
the inequality in (a) and $G(\x u)\geq c|u|^\vartheta$ for some $c>0$ and $\vartheta>1$.

We consider the usual inner product
$\left\langle u ,v \right\rangle=\int\nabla u\cdot\nabla v$ in $\hozo$,
and denote by $\left\|\ \right\|$ the induced norm. 
The differential equation in (\ref{zero}) is the Euler-Lagrange equation for
the energy functional $\iu:\hozo\to\R$,
\begin{equation}\label{EL}
\iu(u)=\oh\|u\|^2-\frac{\lambda}{2}\int_\Omega\ap u^2-\int_\Omega\left[\ap F(\x u) -\mu\am G(\x u)\right].
\end{equation}
Our main result is
\begin{Thm}\label{thm}
There exists $\check{\mu}$ such that for $\mu>\check{\mu}$ the equation
\begin{equation}
\label{one}
-\Delta u=\ap [\lambda u+f(\x u)]-\mu\am g(\x u)
\end{equation}
has an $\hozo$ weak solution $u_\mu$ and, when $\mu_n\to+\infty$, modulo a subsequence,
\begin{equation}
\label{fifteen}
u_{\mu_n}\to u\quad\quad\mbox{in}\ \hozo,
\end{equation}
where $u|_{\ot}$ is a least energy nodal solution of~{\rm (\ref{one})} in $H^1_0(\ot)$,
$u|_{\ohat}$ is a least energy positive solution of~{\rm (\ref{one})} in $H^1_0(\ohat)$,
and $u|_{\ob}$ and $u|_{\Omega^-}$ are zero. 
\end{Thm}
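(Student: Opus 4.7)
\noindent
\textbf{Proof plan for Theorem \ref{thm}.}
The strategy is a constrained minimization on a Nehari-type set that encodes the desired bump structure, followed by asymptotic analysis as $\mu\to\infty$.
First, given $u\in\hozo$, decompose it according to the partition $\Omega=\ot\cup\ohat\cup\ob\cup\Omega^-$ into its positive/negative parts on each component of $\Omega^+$ and its restriction to $\Omega^-$; write $u=\utp-\utm+\uhp-\uhm+\bu+u|_{\Omega^-}$. These pieces are mutually orthogonal in $\hozo$ (with respect to $\langle\cdot,\cdot\rangle$) because they have disjoint supports (up to measure zero) and $\Omega$ is decomposed along the zero set of $a$ and the nodal set of $u$. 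Define the Nehari-type set
$$
\nmu=\bigl\{u\in\hozo:\ \utp\neq 0,\ \utm\neq 0,\ \uhp\neq 0,\ \langle\iu'(u),v\rangle=0\ \text{for}\ v\in\{\utp,\utm,\uhp\}\bigr\},
$$
possibly supplemented by $\uhm=0$ and $\bu=0$ so that the trial solutions have the announced sign structure; the conditions on $\utp,\utm,\uhp$ correspond to the nodal bump on $\ot$ and the positive bump on $\ohat$.

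Next, verify the variational structure on $\nmu$. By hypotheses (b)--(d) and the fibration associated with (c), for each admissible template $w$ with the correct sign pattern there exists a unique triple $(s,t,r)\in(0,\infty)^3$ such that $s\wtp-t\wtm+r\whp$ lies in $\nmu$, and the map is smooth by the implicit function theorem (the constraint Jacobian is diagonally dominant thanks to the strict inequality in (c), which controls the sign of the second variation along the fibers). Condition (d) ensures the quadratic part is positive definite on each subdomain, so $\iu$ is bounded below by a positive constant on $\nmu$. A minimizing sequence is bounded in $\hozo$ by (b) (Ambrosetti--Rabinowitz), and compactness of $\hozo\hookrightarrow L^p(\Omega)$ combined with the subcritical growth in (a) yields a minimizer $u_\mu\in\nmu$ via Ekeland's variational principle.

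Then one shows $u_\mu$ is a free critical point of $\iu$. Since $\nmu$ is a natural constraint — the Lagrange multiplier system is triangular and nondegenerate by the same fibering argument, using (c) — one obtains $\iu'(u_\mu)=0$, hence a weak solution of (\ref{one}). Along the way, the variational inequalities coming from the unrestricted components ($u_\mu|_{\Omega^-}$, and $\uhm,\bu$ if left free) yield the Euler--Lagrange equation on those pieces as well.

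Finally, pass to the limit $\mu_n\to\infty$. The energy $\iu(u_{\mu_n})$ is bounded above by the Nehari level of a fixed multibump test function (three separated bumps living in $\ot,\ohat,\Omega^-$), giving uniform $H^1$ bounds on $u_{\mu_n}$ and the key decay estimate $\mu_n\int\am G(\x u_{\mu_n})\leq C$, which forces $u_{\mu_n}\to 0$ strongly on $\Omega^-$ by hypothesis (b) on $g$. Extract a weak limit $u$, and argue by testing the equation that convergence is strong on each of $\ot,\ohat,\ob$. The limiting pieces $u|_{\ot}$, $u|_{\ohat}$, $u|_{\ob}$ satisfy (\ref{one}) on their respective components with Dirichlet data. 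A lower semicontinuity argument on the constrained Nehari value, together with an upper bound built by gluing the least-energy nodal solution on $\ot$, the least-energy positive solution on $\ohat$, and the trivial solution on $\ob$, shows that in fact $u|_{\ob}=0$ and that $u|_{\ot}$ and $u|_{\ohat}$ are of least energy. The main obstacle is this last step: preventing mass from surviving on $\ob$ and identifying the limits as \emph{least} energy solutions requires matching the asymptotic value of $\inf_{\nmu}\iu$ with the sum of the least Nehari energies on $\ot$ and $\ohat$, which in turn hinges on the unique-fiber structure from (c) and on the decoupling produced by the large parameter $\mu_n$.
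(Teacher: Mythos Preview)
Your decomposition is the fundamental problem. A generic $u\in H^1_0(\Omega)$ does \emph{not} split as a sum of its restrictions to $\ot,\ohat,\ob,\Omega^-$: the restriction $u|_{\ot}$ extended by zero is in $H^1_0(\Omega)$ only if the trace of $u$ on $\partial\ot$ vanishes, which it will not in general. The paper's decomposition is by \emph{orthogonal projection} onto $\underline{H}(\ot),\underline{H}(\ohat),\underline{H}(\ob)$, and the remainder $\bu$ lands in the orthogonal complement, i.e.\ it is harmonic in $\Omega^+$ and carries all the interface information. This $\bu$ is the heart of the difficulty: it cannot be set to zero (your ``possibly supplemented by $\bu=0$'' would immediately disconnect the problem from the full equation on $\Omega$, and the minimizer would not be a free critical point of $\iu$), and controlling it requires the extra constraints $(\mathcal{N}_{iv})$--$(\mathcal{N}_{vii})$ and the key Lemma~2.3 showing $|\bu|_p\to 0$ uniformly on $\nmu$ as $\mu\to\infty$. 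Your three-condition Nehari set has none of this structure, so neither the lower bounds on $\|\utp\|,\|\utm\|,\|\uhp\|$ nor the compactness argument go through.

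A second gap is the passage from constrained minimizer to free critical point. You invoke a Lagrange-multiplier/natural-constraint argument, but the paper explicitly notes that $\nmu$ is \emph{not} a manifold, so this is not available. Instead the paper proves (Proposition~3.3) that the minimizer is a critical point via a deformation/degree argument: one shows $\iu\circ\varphi$ has a strict maximum at $(1,1,1)$ on a 3-dimensional fiber through $u$, computes $\deg(J_\mu,[\nu_2,\nu_3]^3,0)=-1$, and then flows by a truncated pseudo-gradient to derive a contradiction if $\iu'(u)\neq 0$. Your sketch of the $\mu\to\infty$ limit is closer in spirit to the paper, but it too relies on the decoupling produced by $\bu\to 0$, which in turn depends on the correct decomposition and the full set of Nehari constraints.
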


We assume for simplicity that the set $\Omega^+$ has three components.
But when $\Omega^+$ has a different number of components,
Theorem~\ref{thm} can be generalized in a way parallel to the one in \cite{GG}. 
In simple terms we may say that, when $\mu$ is large, one can choose
the solution to be positive, negative, nodal or vanish
in any given component of $\Omega^+$.

Theorem~\ref{thm} generalizes Proposition~2.1 of \cite{GG},
which addresses the case where $\lambda=0$, $f$ is homogeneous and $g=f$, more precisely,
$f(\x u)=g(\x u)=|u|^{p-2}u$ and
$2<p<2^*$. Even in this special case we improve our previous results.
Also, here all proofs are direct, no argument is by contradiction,
so that keeping track of the constants it is possible to give
an upper bound for~$\check{\mu}$.

We allow for a rather general situation for the nonlinearity.
Indefinite weights have also been considered in several other works.
The paper \cite{AT} concerns existence and multiplicity of positive solutions for elliptic equations whose nonlinear term has the form
$W(x)f(u)$ where $W$ changes sign. The paper \cite{AdP} studies equations with
an indefinite nonlinearity both using min-max methods and using Morse theory.
In particular, \cite{AdP} and \cite{AT} treat the delicate issue of conditions
on the indefinite weight and the nonlinearity that lead to the Palais-Smale condition.

The main ideas for the proof of Theorem~\ref{thm} are from 
\cite{BGH}, \cite{CCN}, \cite{GG} and~\cite{RT}.
More specifically,
existence of a sign-changing solution for a superlinear
problem was proved in \cite{CCN} by minimizing the Euler-Lagrange functional
over a Nehari-type set. The nonlinearity considered in
\cite{CCN} satisfied conditions similar to the ones
imposed to our function $f$. Using quite a different approach
to ours but an orthogonal decomposition of $\hozo$, \cite{BGH}
was the first work to establish the existence of multibump {\em positive}\/ solutions 
to~(\ref{zero}), for $N>1$, when $f$ and $g$ are equal and are 
homogeneous superlinear functions. The work \cite{RT}
used cut-off operators and minimization over a Nehari-type manifold
to construct positive multispike solutions for an elliptic system. 
The method of \cite{RT} and
the orthogonal decomposition of \cite{BGH} suggested 
the variational framework in \cite{GG}, used to prove the
existence of multibump nodal solutions to (\ref{zero})
in the special case for $f$ and $g$ mentioned above.
The technique from \cite{GG} is the one we explore here.
We would like to emphasize that our solutions are of
least energy in a set $\nmu$ which is not a manifold
(see \cite[Lemmas~3.1 and 3.2]{BW}). In fact,
in the present case not even $\nmu\cap H^2(\Omega)$ is
a manifold, although it does admit a tangent space
at the minimum $u_\mu$.

The earliest successes in gluing mountain pass solutions
of nonlinear elliptic equations and Hamiltonian systems
came from \cite{CZR1}, \cite{CZR2} and \cite{S}.
The process was simplified by using an alternative 
procedure in \cite{LW}, which allowed the authors
to glue minimizers on the Nehari manifold together as
genuine solutions.

Related local Nehari manifold approaches have already
been used in other problems. In \cite{MP} a technique which
resembles the one in this paper leads to multibump solutions
of a semilinear elliptic Dirichlet problem with an operator in
divergence form. The solutions are 
associated to distinct vanishing components of an
asymptotically vanishing coefficient. If the degeneration set 
consists of $k$ connected components, then existence of at least
$2^k-1$ distinct positive solutions, which concentrate on the degeneration set, is established.

It is also important to mention \cite{dPF}, a motivation of \cite{RT}.
Gluing through local Nehari manifolds was also used in \cite{dPF2}.

Recent interesting 
related results can be found in \cite{AW} and \cite{BCW}.

The organization of this paper is as follows.
In Section~\ref{section2} we define the Nehari-type set $\nmu$ and
give estimates for low energy functions.
In Section~\ref{section3} we prove existence of least energy
solutions in $\nmu$. We also characterize the strong limit of these
solutions as $\mu\to+\infty$. A couple of more technical proofs 
are left to the Appendix.

\section{A Nehari-type set and estimates for low energy functions}\label{section2}

\setcounter{equation}{0}
Let $\varpi$ be equal to $\ot$, $\ohat$ or $\ob$.
Because we assume $\varpi$ is Lipschitz, if 
$u\in\hozo$ and $u\equiv 0$ on the complement of $\varpi$,
then $u|_{\varpi}$ belongs to $H^1_0(\varpi)$.
We define
\begin{eqnarray*}
\hoz(\varpi)&=&\left\{u\in\hozo:\:u=0\ \mbox{in}\ \Omega\setminus\varpi\right\}.
\end{eqnarray*}
For $u$ in $\hozo$,
we denote by $\ut$, $\uh$, $\ub$ and $\bu$ the orthogonal projections of $u$ on 
the orthogonal spaces
$\hoz(\ot)$, $\hoz(\ohat)$, $\hoz(\ob)$ and 
$[\hoz(\ot)\oplus\hoz(\ohat)\oplus\hoz(\ob)]^\bot$. 
The function $\bu$ is harmonic in $\Omega^+$.

Clearly, the derivative of the energy functional $\iu$ in (\ref{EL}) is
$$
\iu'(u)(z)=\left\langle u ,z \right\rangle-\int\left[\ap [\lambda u+f(\x u)]z-\mu\am g(\x u)z\right],
$$
for $u, z\in\hozo$.
The solutions $u_\mu$ of (\ref{one}) in Theorem~\ref{thm} will be obtained by minimizing the functional
$\iu$ on a Nehari-type set which we will soon define. 
First we need some parameters.
We set
$\Lt=(\lambda/\Lambda_1+1)/2<1$
and 
\begin{equation}\label{e1}
\gamma=\frac{1-\Lt}{4}. 
\end{equation}
We denote by $c_p$ be the Sobolev constant
$$ 
c_p|u|_p\leq\|u\|,
$$ 
with $|u|_p=\left(\int u^p\right)^{1/p}$ the $L^p(\Omega)$ norm of $u$.
When the region of integration is not explicitly indicated,
it is understood that integrals are over $\Omega$.
Next we will obtain a lower bound for $\iu(u)$ when $u\in H^1_0(\Omega^+)$.
Consider the set
\begin{equation}\label{s}
S:=\left\{x\in\Omega^+:\:\mbox{dist}\,(x,\R^N\setminus\Omega^+)<{1}/{n_0}\right\},
\end{equation}
where $n_0$ is large enough so that 
$$
|S|\leq\textstyle\left(\frac{\gamma c_{2^*}^2}{\sup\ap (\lambda+C_0)}\right)^{2^*/(2^*-2)},
$$
with $C_0$ as in (a). 
Here and henceforth, when $N=1$ or 2 it should be understood that instead of $2^*$
a fixed exponent greater than $p$ should appear. 
For $u\in\hozo$,
\begin{eqnarray}
\left|\int_S\ap [\lambda u+f(\x u)u]\right|&\leq&(\lambda+C_0)\int_S\ap |u|^2
+C_0\int_S\ap|u|^p\nonumber\\
&\leq&\sup\ap (\lambda+C_0)|u|_{2^*}^2|S|^{(2^*-2)/2^*}+C_0\int_S\ap|u|^p\nonumber\\
&\leq&\frac{\sup\ap (\lambda+C_0)}{c_{2^*}^2}\|u\|^2|S|^{(2^*-2)/2^*}+C_0\int_S\ap|u|^p\nonumber\\
&\leq&\gamma\|u\|^2+C_0\int_S\ap|u|^p.\label{ap1}
\end{eqnarray}
There exists a constant $C_1$, which we may assume greater
than or equal to $C_0$, such that
\begin{equation}
\label{if}
\lambda u^2+f(\xx u)u\leq {\textstyle\frac{1}{2}}
(\lambda+\Lambda_1)u^2+C_1|u|^p\quad\mbox{for}\ u\in\R\ \mbox{and}\ x\in\Omega^+\setminus S.
\end{equation}
For 
$u\in H^1_0(\Omega^+)$.
\begin{eqnarray}
\int_{\Omega^+\setminus S}\ap [\lambda u+f(\x u)]u&\leq&
\int_{\Omega^+\setminus S}\ap
\left[{\textstyle\frac{1}{2}}(\lambda+\Lambda_1)u^2+C_1|u|^p\right]\nonumber\\
&\leq&\Lt\|u\|^2+C_1\int_{\Omega^+\setminus S}\ap|u|^p.\label{ap2}
\end{eqnarray}
Combining (\ref{ap1}) and (\ref{ap2}), we obtain 
\begin{equation}\label{ap3}
\int_{\Omega^+}\ap [\lambda u+f(\x u)]u\leq{\textstyle\frac{3\Lt+1}{4}}\|u\|^2+C_1\int_{\Omega^+}\ap|u|^p
\end{equation}
for $u\in H^1_0(\Omega^+)$. We let
$$
\rho=\textstyle\left(\frac{\gamma c_p^{p}}{\sup\ap C_1}\right)^{1/(p-2)}.
$$
We use inequality (\ref{ap3}) and (b) to obtain the lower bounds
\begin{eqnarray}
\iu(u)&\geq& \frac{1}{2}\|u\|^2-\frac{\lambda}{2}\int\ap u^2-\frac{1}{\theta}\int\ap f(\x u)u\nonumber\\
&\geq&\frac{1}{2}\left(\|u\|^2-\lambda\int\ap u^2-\int\ap f(\x u)u\right)\nonumber\\
&\geq&\frac{3\gamma}{2}\|u\|^2-\frac{C_1}{2}\int\ap|u|^p\nonumber\\
&\geq&\frac{3\gamma}{2}\|u\|^2-\frac{\sup\ap C_1}{2c_p^p}\|u\|^p\nonumber\\
&\geq&\gamma\|u\|^2\label{p1},
\end{eqnarray}
for $u\in H^1_0(\Omega^+)$ such that $\|u\|\leq\rho$.

As in \cite{CCN}, one can prove there exists a function
$v\in\hozo$ such that $v=\vtp-\vtm+\vhp$ with 
$\vtp$, $\vtm$ and $\vhp\not\equiv 0$ and
$$
\iu'(v)(\vtp)=\iu'(v)(\vtm)=\iu'(v)(\vhp)=0.
$$
Finally, we let $R$ satisfy
$$ 
\iu(v)<\textstyle\left(1-\frac{2}{\theta}\right)(1-\Lambda)R^2\quad\mbox{and}\quad R>\rho.
$$ 

We are ready to give the definition of the Nehari-type set $\nmu$.
\begin{Def}\label{L_three} $\nmu$ is the set of functions $u=\ut+\uh+\ub+\bu\in\hozo$
satisfying
\begin{enumerate}
\item[{\rm (}${\cal{N}}_{i}${\rm )}] $\iu^\prime(u)(\utp)=\iu^\prime(u)(\utm)=\iu^\prime(u)(\uhp)=0$,
\item[{\rm (}${\cal{N}}_{ii}${\rm )}] $\utp$, $\utm$, $\uhp\not\equiv 0$,
\item[{\rm (}${\cal{N}}_{iii}${\rm )}] $\iu(u)\leq\iu(v)+1$,
\item[{\rm (}${\cal{N}}_{iv}${\rm )}] $\left\|\ut+\uhp\right\|\leq R$,
\item[{\rm (}${\cal{N}}_{v}${\rm )}] $\max\{\left\|\uhm\right\|,\left\|\ub\right\|\}\leq\rho$,
\item[{\rm (}${\cal{N}}_{vi}${\rm )}] 
$\left(\int\ap\bu^2\right)^{1/2}\!\leq\gamma\min\left\{
\left(\int\ap(\utp)^2\right)^{1/2}\!\!,\left(\int\ap(\utm)^2\right)^{1/2}\!\!,\left(\int\ap(\uhp)^2\right)^{1/2}\right\},$
\item[{\rm (}${\cal{N}}_{vii}${\rm )}]
 $\left\|\bu\right\|\leq\min\{\left\|\utp\right\|,\left\|\utm\right\|,\left\|\uhp\right\|\}$.
\end{enumerate}
\end{Def}
Note that $\nmu\neq\emptyset$ as $v\in\nmu$.
The conditions (${\cal{N}}_{vi}$) and (${\cal{N}}_{vii}$) 
are crucial to prove lower bounds on the norms of some of the components
of the functions in $\nmu$.

The next lemma will allow us to write 
the integrals $\int\!\ap u^2$ and $\int\!\ap F(\x u)$, for large $\mu$, as a sum of integrals in terms
of the components of $u$ plus a small error.
\begin{Lem}\label{L_one}
For any positive $\delta$, there exists $\mu_\delta$ such that, for all $\mu>\mu_\delta$,
$${u\in\nmu}\quad\Rightarrow\quad|\bu|_p^p<\delta.$$
\end{Lem}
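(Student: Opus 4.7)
The plan is to use the energy bound $(\mathcal{N}_{iii})$, together with a uniform control on $\|u\|$, to force $\int\am G(\x u) \to 0$ as $\mu \to \infty$, and then to propagate the resulting smallness of $\bu = u$ on $\Omega^-$ to all of $\Omega$ via weak compactness and the harmonicity of $\bu$ on $\Omega^+$.

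First I would derive a $\mu$-independent bound $\|u\| \le K$ for $u \in \nmu$. Since $\hoz(\ot)$, $\hoz(\ohat)$, $\hoz(\ob)$ are mutually orthogonal in $\hozo$ (their elements have pairwise disjoint supports), condition $(\mathcal{N}_{iv})$ gives $\|\utp\|,\|\utm\|,\|\uhp\| \le R$; combined with $(\mathcal{N}_{vii})$, also $\|\bu\| \le R$; and $(\mathcal{N}_v)$ bounds $\|\uhm\|$ and $\|\ub\|$ by $\rho$. Summing the orthogonal pieces yields $\|u\|^2 \le 3R^2 + 2\rho^2$. The constants $R$, $\rho$, and $\iu(v)$ are all $\mu$-independent, because $v \in \hoz(\ot) \oplus \hoz(\ohat)$ vanishes on $\Omega^-$ and so annihilates $\am$ both in $\iu(v)$ and in the equations defining $v$. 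Using the growth estimate from (a), $\int\ap u^2$ and $\int\ap F(\x u)$ are then controlled by a $\mu$-independent constant, and $(\mathcal{N}_{iii})$ gives
\begin{equation*}
\mu \int_{\Omega^-} \am G(\x u) \,\le\, \iu(v) + 1 + \tfrac{\lambda}{2}\int\ap u^2 + \int\ap F(\x u) \,\le\, C.
\end{equation*}
Since $\ut,\uh,\ub$ all vanish on $\Omega^-$, we have $u = \bu$ there, whence $\int_{\Omega^-} \am G(\x \bu) \le C/\mu$.

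To convert this decay into $|\bu|_p^p < \delta$, I would argue sequentially: it suffices to show that for every sequence $u_n \in \nmun$ with $\mu_n \to \infty$, $|\bun|_p \to 0$. Since $\|\bun\| \le R$, pass to a subsequence with $\bun \rightharpoonup w$ weakly in $\hozo$, strongly in $L^p(\Omega)$, and a.e. The subspace $[\hoz(\ot)\oplus\hoz(\ohat)\oplus\hoz(\ob)]^\perp$ is weakly closed, so $w$ lies in it and is in particular harmonic on each of $\ot,\ohat,\ob$. By continuity of $G$, pointwise convergence and Fatou, $\int_{\Omega^-} \am G(\x w) = 0$; since $G(x,s)>0$ for $s\ne 0$ by (b) and $\am > 0$ on $\Omega^-$, this forces $w = 0$ a.e.\ on $\Omega^-$.

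The crux will be concluding $w \equiv 0$ on $\Omega^+$. For each component $\varpi \in \{\ot,\ohat,\ob\}$, the trace of $w|_\varpi$ on $\partial\varpi \cap \partial\Omega$ vanishes because $w \in \hozo$, while the trace on the Lipschitz interface $\partial\varpi \cap \Omega \subset \overline{\Omega^-}$ vanishes by the $H^1$ trace theorem from the $\Omega^-$ side, where $w \equiv 0$. Hence $w|_\varpi \in H^1_0(\varpi)$; together with harmonicity and uniqueness for the Dirichlet problem for $-\Delta$, this forces $w \equiv 0$ on $\varpi$, hence on all of $\Omega$. Strong $L^p$ convergence then yields $|\bun|_p \to 0$, which proves the lemma. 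The main technical obstacle will be this trace identification at the interface between $\Omega^+$ and $\Omega^-$; the Lipschitz hypothesis on the components of $\Omega^+$ is precisely what makes it go through.
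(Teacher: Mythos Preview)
Your argument is correct and takes a genuinely different route from the paper's. Both proofs begin the same way: the constraints $(\mathcal{N}_{iv})$, $(\mathcal{N}_{v})$, $(\mathcal{N}_{vii})$ give a $\mu$-independent bound on $\|u\|$ (your constant $3R^2+2\rho^2$ is a slight overcount, but any bound suffices), and then $(\mathcal{N}_{iii})$ forces $\int_{\Omega^-}\am G(\x\bu)\le C/\mu$. From here the approaches diverge. The paper proceeds \emph{quantitatively}: it splits $\Omega^-$ into a thin boundary strip, a small-value set, and a remainder on which $\am\ge\beta>0$ and $G(\x u)\ge c|u|^\vartheta$, obtaining $|\bu|_{L^\vartheta(\Omega^-)}$ small; it then interpolates to $L^p(\Omega^-)$, passes through a trace inequality on $\partial\Omega^-$, and invokes an elliptic estimate of Lions--Magenes type ($\|\bu\|_{L^2(\Omega^+)}\lesssim\|\bu\|_{H^{-1/2}(\partial\Omega^+)}$, using that $\bu$ is harmonic in $\Omega^+$) to carry the smallness into $\Omega^+$. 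You instead proceed by \emph{compactness}: a weak limit $w$ of $\bun$ lies in the harmonic subspace, vanishes on $\Omega^-$ by Fatou, and then the trace identification across the Lipschitz interface forces $w|_\varpi\in H^1_0(\varpi)$, whence $w\equiv 0$ by uniqueness for the Dirichlet Laplacian. Your argument is shorter and avoids the explicit trace and Lions--Magenes machinery; the price is that it is non-constructive, whereas the paper's direct estimates track all constants and yield an explicit $\mu_\delta$ --- a feature the authors emphasize, since it ultimately allows an explicit upper bound for $\check{\mu}$. Note finally that both proofs tacitly use that $\partial\varpi\cap\Omega\subset\partial\Omega^-$ (equivalently, that the closures of the components of $\Omega^+$ do not meet inside $\Omega$); the paper needs this for the identification $\|\bu\|_{L^q(\partial\Omega^+)}=\|\bu\|_{L^q(\partial\Omega^-)}$, and you need it for the two-sided trace argument.
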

The proof of Lemma~\ref{L_one} is given in the Appendix.
In the remainder of this section we establish a number of lemmas
which will be used to prove that, for large $\mu$, the functional $\iu$ has a minimum
on $\nmu$ and to prove that, for large $\mu$, every minimizer of $\iu$ on $\nmu$ is a critical point
of $\iu$. The next lemma will be used (via Lemma~\ref{kappa2}) in
connection with (${\cal{N}}_{ii}$) and in connection with (${\cal{N}}_{vii}$):
\begin{Lem}\label{L_two}
There exists a positive constant $\kappa_1$ such that for all $\mu$,
$$ 
{u\in\nmu}\quad\Rightarrow\quad\min\left\{\|\utp\|,\|\utm\|,\|\uhp\|\right\}\geq\kappa_1.
$$ 
\end{Lem}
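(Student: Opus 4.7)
I would prove $\|\utp\|\ge\kappa_1$; the arguments for $\|\utm\|$ and $\|\uhp\|$ are essentially identical by symmetry. The strategy is to test the Nehari identity from $({\cal N}_i)$ against $\utp$ and extract a super-quadratic inequality of the form $\|\utp\|^2\le C_3\|\utp\|^2+C_4\|\utp\|^p$ with $C_3<1$, from which the lower bound follows immediately by rearrangement.

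First I would observe that $\utp\in\hoz(\ot)$ is orthogonal in $\hozo$ to $\utm$ (the gradients having disjoint supports), to $\uh$ and $\ub$ (whose supports lie in the components $\ohat,\ob$ disjoint from $\ot$), and to $\bu$ (by definition of $\bu$), so $\langle u,\utp\rangle=\|\utp\|^2$. Since $\utp$ is supported in $\Omega^+$ where $\am$ vanishes, the condition $\iu'(u)(\utp)=0$ reduces to
$$\|\utp\|^2=\int \ap[\lambda u+f(\x u)]\utp.$$
Moreover, on $\{\utp>0\}\subset\ot$ the components $\uh,\ub,\utm$ all vanish, so $u=\utp+\bu$ there, whence $|u|\le\utp+|\bu|$ pointwise on the domain of integration.

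Next I would split the integral as $\int_S+\int_{\Omega^+\setminus S}$. On $\Omega^+\setminus S$, inequality (\ref{if}) together with the fact that $\lambda u$ and $f(\x u)$ share the sign of $u$ (by (b)) yields the pointwise bound $|\lambda u+f(\x u)|\le\oh(\lambda+\Lambda_1)|u|+C_1|u|^{p-1}$. Inserting $|u|\le\utp+|\bu|$, Cauchy--Schwarz combined with $({\cal N}_{vi})$ gives $\int\ap|\bu|\utp\le\gamma\int\ap(\utp)^2$, and the Poincar\'e-type inequality $\int\ap(\utp)^2\le\|\utp\|^2/\Lambda_1$ (valid since $\lt\ge\Lambda_1$) then bounds the resulting quadratic piece by $\Lt(1+\gamma)\|\utp\|^2$; the $p$-power part, including the cross term $|\bu|^{p-1}\utp$, is absorbed into $C\|\utp\|^p$ via Sobolev embedding and $({\cal N}_{vii})$. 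On $S$, condition (a) gives $|\lambda u+f(\x u)|\le(\lambda+C_0)|u|+C_0|u|^{p-1}$, and mimicking the argument of (\ref{ap1}) the choice of $|S|$ in (\ref{s}) together with $({\cal N}_{vii})$ ensures
$$(\lambda+C_0)\int_S\ap\bigl[(\utp)^2+|\bu|\utp\bigr]\le 2\gamma\|\utp\|^2,$$
while the $p$-power piece is once more absorbed into $C\|\utp\|^p$.

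Combining the two pieces gives $\|\utp\|^2\le[\Lt(1+\gamma)+2\gamma]\|\utp\|^2+C_4\|\utp\|^p$. Substituting $\gamma=(1-\Lt)/4$, the bracket equals $1-(1-\Lt)(2-\Lt)/4$, which is strictly less than $1$ because $\Lt<1$. Rearranging gives $\|\utp\|^{p-2}\ge(1-\Lt)(2-\Lt)/(4C_4)$, producing the desired $\kappa_1>0$ depending only on the data. The main obstacle is precisely to secure $C_3<1$: this is what forces the two-scale treatment (boundary layer $S$ handled via the smallness of $|S|$, interior $\Omega^+\setminus S$ via (\ref{if})) and the simultaneous use of both $({\cal N}_{vi})$ and $({\cal N}_{vii})$ to keep the $\bu$-contributions $\gamma$-small rather than $C_0/\Lambda_1$-large.
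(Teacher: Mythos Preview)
Your proposal is correct and follows essentially the same route as the paper: test $({\cal N}_i)$ against $w\in\{\utp,-\utm,\uhp\}$, observe $u=w+\bu$ on the support of $w$, split the integral over $S$ and $\Omega^+\setminus S$, control the $\bu$ cross-terms by $({\cal N}_{vi})$ on the interior and by $({\cal N}_{vii})$ (via Sobolev and the smallness of $|S|$) near the boundary, and conclude from the resulting inequality $\|w\|^2\le C_3\|w\|^2+C_4\|w\|^p$ with $C_3<1$. The only differences are cosmetic: the paper treats the linear term $\lambda\!\int\ap uw$ globally (bounding it by $(2\Lambda-1)(\gamma+1)\|w\|^2$ via $({\cal N}_{vi})$ alone) and splits only the $f$-integral, while you split both together; and the paper records the cruder coefficient $\frac{\Lambda+3}{4}$ in place of your sharper $1-(1-\Lambda)(2-\Lambda)/4$.
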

\begin{proof}
Denote by $w$ one of the three functions $\utp$, $-\utm$ or $\uhp$; let 
$\varpi$ be $\ot$ for the first and second choices for $w$, and
be $\ohat$ for the third choice for $w$.
Note that on the support of $w$, we have $u=w+\bu$. Let $S$ be as in (\ref{s}).
By (${\cal{N}}_{vii}$) and a computation similar to (\ref{ap1}),
\begin{eqnarray*}
\int_S\ap f(\x u)w&\leq&2\gamma\|w\|^2
+2^{p-2}\sup\ap C_1\int_S(|w|^p+|\bu|^{p-1}|w|),
\end{eqnarray*}
whereas by (\ref{if}) and (${\cal{N}}_{vi}$) 
\begin{eqnarray*}
\int_{\Omega\setminus S}\ap f(\x u)w&\leq&(\gamma+1)
{\textstyle\frac{1}{2}}(\Lambda_1-\lambda)\int\ap
w^2\\ 
&&\quad +2^{p-2}\sup\ap C_1\int_{\Omega\setminus S}(|w|^p+|\bu|^{p-1}|w|)\\
&\leq&(1-\Lt)(\gamma+1)\|w\|^2\\
&&\quad +2^{p-2}\sup\ap C_1\int_{\Omega\setminus S}(|w|^p+|\bu|^{p-1}|w|).
\end{eqnarray*}
Similarly, we have
$$
\lambda\int\ap uw\leq (2\Lambda-1)(\gamma+1)\|w\|^2.
$$
So, by (${\cal{N}}_{i}$), (\ref{e1}) and (${\cal{N}}_{vii}$), a simple
computation leads to
\begin{eqnarray*}
\|w\|^2=\lambda\int\ap uw+\int\ap f(\x u)w&\leq&{\textstyle\frac{\Lt+3}{4}}\|w\|^2
+2^{p-1}\sup\ap\frac{C_1}{c_p^{p}}\|w\|^p.
\end{eqnarray*}
From (${\cal{N}}_{ii}$), $\|w\|$ is bounded below by 
$$\kappa_1=\textstyle\frac{1}{2}\left(
\frac{\gamma c_p^{p}}{2\sup\ap C_1}\right)^{1/(p-2)}.$$
\end{proof}
The next lemma will be used in connection with (${\cal{N}}_{iv}$).
We denote by $o(1)$ a quantity whose absolute value can be made arbitrarily small,
uniformly in $u\in\nmu$, when $\mu$ is large.
\begin{Lem}\label{overr}
Let $\hat{R}>\overline{R}>0$. 
If $\mu$ is sufficiently large, then 
$$u\in\nmu\ \wedge\ 
\iu(u)\leq\left(1-\frac{2}{\theta}\right)(1-\Lambda)\overline{R}^2\quad\Rightarrow\quad 
\|\ut+\uhp\|\leq\hat{R}.
$$
\end{Lem}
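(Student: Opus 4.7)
My plan is to establish the uniform lower bound
\[
\iu(u) \geq \left(1 - \tfrac{2}{\theta}\right)(1 - \Lambda)\|\ut + \uhp\|^2 + o_\mu(1)
\]
valid for every $u \in \nmu$, where $o_\mu(1) \to 0$ as $\mu \to +\infty$. Combined with the hypothesis $\iu(u) \leq (1-2/\theta)(1-\Lambda)\overline{R}^2$ and the strict inequality $\hat{R} > \overline{R}$, this will yield $\|\ut+\uhp\|^2 \leq \overline{R}^2 + o_\mu(1)/[(1-2/\theta)(1-\Lambda)] \leq \hat{R}^2$ for $\mu$ sufficiently large.

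Set $w := \utp - \utm + \uhp$, so that $\|\ut + \uhp\| = \|w\|$, and write $u = w - \uhm + \ub + \bu$. The four summands are mutually $H^1_0$-orthogonal and, up to null sets, the first three have pairwise disjoint supports ($\utp$ and $\utm$ live in $\overline{\ot}$; $\uhp$ and $\uhm$ are disjointly supported in $\overline{\ohat}$; and $\ub$ lies in $\overline{\ob}$). My first step is the approximate decomposition
\[
\iu(u) = J(w) + J(-\uhm) + J(\ub) + \tfrac{1}{2}\|\bu\|^2 + \mu\!\int_{\Omega^-}\am G(\x \bu) + o_\mu(1),
\]
where $J(z) := \tfrac{1}{2}\|z\|^2 - \tfrac{\lambda}{2}\int \ap z^2 - \int \ap F(\x z)$. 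Indeed, the disjoint supports cause $\int\ap u^2$ and $\int \ap F(\x u)$ to split into the three pieces plus cross terms involving $\bu$; by hypothesis (a) combined with Hölder's inequality and Lemma~\ref{L_one} (which forces $|\bu|_p < \delta$ for $\mu > \mu_\delta$, hence $|\bu|_2 \to 0$ and $\int \ap \bu^2 \to 0$ as well), each cross term is $o_\mu(1)$.

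Since $\|\uhm\|, \|\ub\| \leq \rho$ by (${\cal{N}}_{v}$), estimate~(\ref{p1}) gives $J(-\uhm), J(\ub) \geq 0$; the remaining two summands are also nonnegative (the last because $G \geq 0$ by~(b)). Hence $\iu(u) \geq J(w) + o_\mu(1)$. Next, (${\cal{N}}_{i}$) and linearity give $\iu'(u)(w) = \iu'(u)(\utp) - \iu'(u)(\utm) + \iu'(u)(\uhp) = 0$. Because $w$ vanishes on $\Omega^-$, the $\am g$-contribution disappears, and on $\mathrm{supp}(w)$ one has $u - w = \bu$, so (a) combined with Lemma~\ref{L_one} yields the almost-Nehari identity $\|w\|^2 - \int \ap[\lambda w + f(\x w)]w = \iu'(u)(w) + o_\mu(1) = o_\mu(1)$. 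Using this together with~(b) (which gives $-\int \ap F(\x w) + \tfrac{1}{\theta}\int \ap f(\x w) w \geq 0$) and $\int \ap w^2 \leq \Lambda_1^{-1}\|w\|^2$ (Poincaré on each component of $\Omega^+$), the standard Nehari computation applied to $J$ produces $J(w) \geq (1-2/\theta)(1-\Lambda)\|w\|^2 + o_\mu(1)$, completing the lower bound.

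The main obstacle is the approximate decomposition itself: one must verify that $\int \ap[F(\x v' + \bu) - F(\x v')]$ (with $v' := w - \uhm + \ub$) and the analogous quadratic cross terms are $o_\mu(1)$ uniformly in $u \in \nmu$. The key ingredients will be the a priori norm bound $\|u\| \leq R + 2\rho$ (derived from (${\cal{N}}_{iv}$)--(${\cal{N}}_{vii}$)), hypothesis (a), and Hölder: together they reduce every cross term to a product of a bounded quantity with $|\bu|_p$ or $|\bu|_2$, both of which Lemma~\ref{L_one} makes arbitrarily small.
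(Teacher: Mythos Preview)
Your argument is correct and follows the same route as the paper's: bound $\iu(u)$ from below using (b), invoke $(\mathcal{N}_i)$ to get the (approximate) Nehari identity for $w=\ut+\uhp$, control all cross terms in $\bu$ via hypothesis~(a) and Lemma~\ref{L_one}, and finish with the Poincar\'e-type inequality coming from~(d). The only organizational difference is that you split off the $\uhm$ and $\ub$ contributions and discard them through~(\ref{p1}) and $(\mathcal{N}_v)$, whereas the paper keeps them bundled with $\ut+\uhp$ and bounds $\|\ut+\uh+\ub\|$ in one stroke.
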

\begin{proof}
Let $u\in\nmu$.
We bound from below $\iu(u)$ by an expression involving
the norms of the components of $u$.
From (b) and (${\cal{N}}_{i}$),
\begin{eqnarray}
\iu(u)&\geq&\oh\|u\|^2-\frac{\lambda}{2}\int\ap u^2-\ott\int\ap f(\x u)u+\mu\int\am G(\x u)\nonumber\\
	&=&\left(\oh-\ott\right)\left(\|\ut+\uh+\ub\|^2-{\lambda}\int\ap (\ut+\uh+\ub)^2\right)\nonumber\\
	&&-\lambda\left(1-\ott\right)\int\ap (\ut+\uh+\ub)\bu-\ott\int\ap f(\x u)\bu\nonumber\\
	&&+\left(\oh\|\bu\|^2-\frac{\lambda}{2}\int\ap \bu^2+\mu\int\am G(\x \bu)\right)\nonumber\\ 
	&\geq&\left(1-\frac{2}{\theta}\right)(1-\Lambda)\|\ut+\uh+\ub\|^2+o(1).\nonumber
\end{eqnarray}
For the last inequality we have used
condition (a) and Lemma~\ref{L_one}. 
This lower bound for $\iu(u)$ implies 
$$
\|\ut+\uhp\|\leq\overline{R}+o(1).
$$
So
$$
\|\ut+\uhp\|\leq\hat{R}
$$
for sufficiently large $\mu$.
\end{proof}
The next lemma will be used in connection with (${\cal{N}}_{v}$):
\begin{Lem}\label{dois}
Let $0<\delta<1/2$. 
If $\mu$ is sufficiently large, then
for any $u\in\nmu$ with $\iu(u)<\inf_{\nmu}\iu+\delta$,
$$
\|\uhm\|\leq 2\sqrt{\delta/\gamma}\quad\mbox{and}\quad \|\ub\|\leq 2\sqrt{\delta/\gamma}.
$$
\end{Lem}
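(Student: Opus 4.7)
The plan is to compare $\iu(u)$ with the competitor $u^{**}:=\ut+\uhp+\bu$, obtained from $u=\ut+\uhp-\uhm+\ub+\bu$ by suppressing $\uhm$ and $\ub$. Once one proves (i) that $u^{**}\in\nmu$ for $\mu$ large and (ii) that $\iu(u)-\iu(u^{**})\geq\gamma(\|\uhm\|^2+\|\ub\|^2)+o(1)$, the chain
$$
\delta>\iu(u)-\inf_{\nmu}\iu\geq\iu(u)-\iu(u^{**})\geq\gamma(\|\uhm\|^2+\|\ub\|^2)+o(1)
$$
yields $\gamma\|\uhm\|^2\leq 4\delta$ and $\gamma\|\ub\|^2\leq 4\delta$ once $\mu$ is large enough to absorb the $o(1)$ term into $3\delta$, giving the claimed bounds.

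Step (i) follows almost by inspection. The supports of $\utp,\utm,\uhp,\uhm,\ub$ are pairwise disjoint, so $u^{**}$ agrees with $u$ pointwise on the supports of $\utp,\utm,\uhp$; hence $\iu'(u^{**})(\utp)=\iu'(u)(\utp)=0$ and similarly for the other two, verifying $({\cal N}_i)$ and $({\cal N}_{ii})$. The orthogonal components $\ut,\uhp,\bu$ of $u^{**}$ coincide with those of $u$, so $({\cal N}_{iv})$, $({\cal N}_{vi})$, $({\cal N}_{vii})$ carry over, while $({\cal N}_v)$ is trivial since the corresponding components of $u^{**}$ vanish. For $({\cal N}_{iii})$, note that $\iu(u)<\inf_{\nmu}\iu+\delta\leq\iu(v)+1/2$ using $\delta<1/2$ and $v\in\nmu$, so once $|o(1)|\leq 1/2$ one obtains $\iu(u^{**})\leq\iu(u)+|o(1)|\leq\iu(v)+1$.

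For (ii), using pointwise disjointness, the identity $\uhp\uhm=0$ a.e., and the orthogonality relation $\|u\|^2-\|u^{**}\|^2=\|\uhm\|^2+\|\ub\|^2$, a direct rearrangement gives
$$
\iu(u)-\iu(u^{**}) = \iu(-\uhm) + \iu(\ub) + E,
$$
where $E$ collects the cross term $-\lambda\int\ap\bu(\ub-\uhm)$ together with two integrals of the form $-\int\ap[F(\x w+\bu)-F(\x\bu)-F(\x w)]$, one with $w=-\uhm$ and one with $w=\ub$. Hypothesis (a) and the mean-value theorem bound each such $F$-difference pointwise by $C(|w||\bu|+|w|^{p-1}|\bu|+|w||\bu|^{p-1})$; H\"older's inequality, the bound $\|\uhm\|,\|\ub\|\leq\rho$ from $({\cal N}_v)$, and Lemma~\ref{L_one} then give $|E|\leq C'(|\bu|_2+|\bu|_p+|\bu|_p^{p-1})=o(1)$. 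Since $-\uhm,\ub\in H^1_0(\Omega^+)$ with norms at most $\rho$, the estimate (\ref{p1}) (whose derivation relies only on sign-symmetric bounds on $\lambda u^2+f(\x u)u$) applies to both, yielding $\iu(-\uhm)\geq\gamma\|\uhm\|^2$ and $\iu(\ub)\geq\gamma\|\ub\|^2$, which completes step (ii).

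The main technical point is the control of $E$: although $\uhm$ and $\ub$ are only known to have norm at most $\rho$ (not small), one must verify that the mixed terms are genuinely of order $|\bu|_p$, a quantity that Lemma~\ref{L_one} makes uniformly small on $\nmu$ as $\mu\to\infty$.
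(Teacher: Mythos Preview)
Your argument is correct and follows the same idea as the paper: remove the unwanted components $\uhm$ and $\ub$, check that the resulting function still lies in $\nmu$, and compare energies via the lower bound (\ref{p1}). The only difference is cosmetic: the paper treats the two components separately, using the competitors $u+\uhm$ and $u-\ub$ one at a time, whereas you drop both at once with $u^{**}=\ut+\uhp+\bu$; this produces a slightly larger cross term $E$, but your estimate for it via hypothesis~(a) and Lemma~\ref{L_one} is correct and yields the same conclusion.
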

\begin{proof}
Notice that 
$$
u\in\nmu\quad\Rightarrow\qquad 
u+\uhm\ {\rm and}\ u-\ub\ {\rm satisfy}\ {\rm (}{\cal{N}}_{i}{\rm )}.
$$
From (a) and Lemma~\ref{L_one},
\begin{eqnarray*}
\int\ap f(\x u)\uhm-\int\ap f(\x -\uhm)\uhm&=&\int\left(\ap\int_0^1f^\prime(\x s\bu-\uhm)\,ds\,\bu\uhm\right)\\
&=&o(1).
\end{eqnarray*}
Let $0<\delta<1/2$.
A simple consequence of (\ref{p1}) is that for $u\in\nmu$ with 
$\iu(u)<\inf_{\nmu}\iu+\delta$, with $\mu$ sufficiently large so that
$u+\uhm\in\nmu$,
\begin{eqnarray}
\inf_{\nmu}\iu&\leq&\iu(u+\uhm)\nonumber\\
	&=&\iu(u)-\left(\oh\|\uhm\|^2-\frac{\lambda}{2}\int\ap(\uhm)^2-\int\ap F(\x -\uhm)\right)+o(1)\nonumber\\
	&\leq&\iu(u)-\gamma\|\uhm\|^2+o(1)\nonumber\\
	&<&\inf_{\nmu}\iu+\delta-\gamma\|\uhm\|^2+o(1).\label{uh}
\end{eqnarray}
Similarly,
if $\mu$ is sufficiently large, then $u-\ub\in\nmu$ and 
\begin{equation}
\inf_{\nmu}\iu\leq\iu(u-\ub)\leq\iu(u)-\gamma\|\ub\|^2+o(1)<\inf_{\nmu}\iu+\delta-\gamma\|\ub\|^2+o(1).
\label{ub}
\end{equation}
Inequalities (\ref{uh}) and (\ref{ub}) imply Lemma~\ref{dois}.
\end{proof}
The next lemma will be used in connection with (${\cal{N}}_{vi}$):
\begin{Lem}\label{kappa2}
There exists a positive constant $\kappa_2$ such that for all $\mu$ sufficiently large,
$$\textstyle
u\in\nmu\ \ \Rightarrow\ \
\min\left\{\nutp,
\nutm,\nuhp\right\}\geq\kappa_2^{1/2}.
$$
\end{Lem}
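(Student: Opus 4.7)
The plan is to argue by contradiction, combining Rellich--Kondrachov compactness with Lemmas~\ref{L_one} and~\ref{L_two}. If the conclusion failed, then for every $n\in\mathbb{N}$ one could find $\mu_n>n$ and $u_n\in\nmun$ with the minimum in the statement strictly smaller than $1/n$. Passing to a subsequence and invoking the symmetric roles of the three components, I may assume $\int\ap(\utpn)^2\to 0$. Set $w_n:=\utpn\in H^1_0(\ot)$; by Lemma~\ref{L_two}, $\|w_n\|\geq\kappa_1$, while (${\cal{N}}_{iv}$) gives $\|w_n\|\leq R$. Since $p<2^*$, a further subsequence satisfies $w_n\to w_\infty$ strongly in $L^p(\ot)$ and in $L^2(\ot)$. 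Because $a>0$ on $\ot$, the hypothesis $\int\ap w_n^2\to 0$ forces $\int\ap w_\infty^2=0$, so $w_\infty\equiv 0$ and, in particular, $|w_n|_p\to 0$.

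Simultaneously, Lemma~\ref{L_one} gives $|\bun|_p\to 0$, whence $\int\ap\bun^2\to 0$ by H\"older's inequality. On the support of $w_n$, which lies in $\ot$, we have $u_n=w_n+\bun$, so the identity (${\cal{N}}_i$) reads
\[
\|w_n\|^2=\lambda\int\ap w_n^2+\lambda\int\ap\bun w_n+\int\ap f(\x u_n)w_n.
\]
The first summand on the right vanishes by hypothesis, the second by Cauchy--Schwarz together with $\int\ap\bun^2\to 0$. The crux is therefore to show that $\int\ap f(\x u_n)w_n\to 0$.

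For this, hypothesis (a) (and $f(\x 0)\equiv 0$) gives $|f(\x u_n)|\leq C(|u_n|+|u_n|^{p-1})$. After expanding $u_n=w_n+\bun$ and using $(|w|+|b|)^{p-1}\leq 2^{p-2}(|w|^{p-1}+|b|^{p-1})$, the bound reduces to a sum of the four integrals $\int\ap w_n^2$, $\int\ap|\bun||w_n|$, $\int\ap|w_n|^p$, and $\int\ap|\bun|^{p-1}|w_n|$. Each vanishes in the limit: the first two by direct estimates and Cauchy--Schwarz; the third because $\ap$ is bounded and $|w_n|_p\to 0$; and the fourth by H\"older's inequality in $L^p$, yielding $\sup\ap\,|w_n|_p|\bun|_p^{p-1}\to 0$. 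Hence $\|w_n\|^2\to 0$, contradicting $\|w_n\|\geq\kappa_1>0$. The main technical obstacle is the control of $\int\ap f(\x u_n)w_n$; once the orthogonal decomposition $u_n=w_n+\bun$ on $\mathrm{supp}(w_n)$ is exploited alongside Lemma~\ref{L_one}, the remaining pieces fall into place.
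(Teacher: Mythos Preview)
Your argument is correct, but it follows a genuinely different route from the paper's. The paper proceeds \emph{directly}: starting from the identity $\|w\|^2=\lambda\int\ap w^2+\int\ap f(\x w)w+o(1)$ (obtained from (${\cal N}_i$) and Lemma~\ref{L_one}), it applies inequality~(\ref{ap3}) together with the interpolation $\int\ap|w|^p\leq\bigl(\int\ap w^2\bigr)^{p\varsigma/2}\bigl(\sup\ap\,|w|_{2^*}^{2^*}\bigr)^{p(1-\varsigma)/2^*}$, where $\tfrac{1}{p}=\tfrac{\varsigma}{2}+\tfrac{1-\varsigma}{2^*}$. This yields $\gamma\|w\|^2\leq C\bigl(\int\ap w^2\bigr)^{p\varsigma/2}+o(1)$, and the lower bound on $\int\ap w^2$ then follows immediately from Lemma~\ref{L_two} and the bound $|w|_{2^*}\leq R/c_{2^*}$. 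Your proof instead argues by contradiction and invokes Rellich--Kondrachov compactness to force $|w_n|_p\to 0$, after which the same identity from (${\cal N}_i$) collapses. The trade-off is clear: your route is perhaps more transparent and sidesteps the weighted interpolation, but it is non-constructive, whereas the paper's direct argument produces an explicit $\kappa_2$ in terms of $\kappa_1$, $R$, $C_1$, $\Lambda$ and Sobolev constants --- which is precisely the point the authors emphasize in the Introduction (``all proofs are direct, no argument is by contradiction, so that keeping track of the constants it is possible to give an upper bound for~$\check{\mu}$'').
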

\begin{proof}
Consider again $u\in\nmu$ and $w$ equal to one of the three functions
$\utp$, $-\utm$ or $\uhp$. Let $\varsigma$ be such that $\op=\frac{\varsigma}{2}+\frac{1-\varsigma}{2^*}$.
From Lemma~\ref{L_one} and (\ref{ap3}), 
\begin{eqnarray*}
\|w\|^2&=&\lambda\int\ap uw+\int\ap f(\x u)w=\lambda\int\ap w^2+\int\ap f(\x w)w+o(1)\nonumber\\
	&\leq&{\textstyle\frac{3\Lt+1}{4}}\|w\|^2+C_1\left(\int\ap w^2\right)^{p\varsigma/2}
	\left(\sup\ap|w|_{2^*}^{2^*}\right)^{p(1-\varsigma)/2^*}+o(1).
\end{eqnarray*}
Hence, Lemma~\ref{kappa2} follows from Lemma~\ref{L_two}.
\end{proof}

For $u$ and $w$ as above,
consider the function $\g:\R^+\to\R$, defined by
$$ 
\g(t;w)=\int\ap\frac{f(\x tw)}{t}w=\int\ap\frac{f(\x tw)}{tw}w^2,
$$ 
with the understanding that $f(\x u)/u=0$ for $u=0$.
Henceforth the letter $C$ denotes a constant which may differ from line to line.
We examine some simple properties of $\g$:
\begin{Claim}
If $\mu$ is sufficiently large, then (i)
the function $\g(t;w)\to+\infty$ as $t\to+\infty$, uniformly in $\mu$ and $u\in\nmu$, 
(ii) $\g(t;w)\to\g_0(w)$ as $t\to 0$, (iii) the function
$\g$ is strictly increasing, (iv) there exists $\kappa_0$, independent of
$\mu$ and $u\in\nmu$, such that $\g(1;w)-\g_0(w)\geq\kappa_0$,
and (v) there exists $\kappa'_0$, independent of
$\mu$ and $u\in\nmu$, such that
$\g'(t;w)\geq\kappa'_0$ for $t\in[\eta,1/\eta]$; here $0<\eta<1$ is fixed.
\end{Claim}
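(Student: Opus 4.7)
The plan is to verify the five properties in the order (ii)--(iii)--(i)--(iv)--(v), since the two analytic observations identify $\g_0(w)$ and the sign of $\g'$ cleanly, after which the growth (i) and lower bound (iv) follow from the superlinearity and the Nehari condition, while the uniform derivative lower bound~(v) is the main obstacle.

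For item~(ii), hypothesis (a) integrated from zero yields $|f(\xx s)/s|\leq C_0+\frac{C_0}{p-1}|s|^{p-2}$, so the integrand $\ap(f(\x tw)/(tw))w^2$ is dominated by a fixed integrable function on any bounded $t$-interval; since $f'(\x 0)\equiv 0$, $f(\x tw)/(tw)\to 0$ pointwise as $t\to 0^+$, and dominated convergence gives $\g(t;w)\to 0$, so I set $\g_0(w):=0$. Item~(iii) follows by differentiating under the integral sign to obtain
$$
\g'(t;w)=\frac{1}{t}\int\ap\left(f'(\x tw)-\frac{f(\x tw)}{tw}\right)w^2,
$$
whose integrand is strictly positive on $\{w\neq 0\}$ by~(c).

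For item~(i), the standard Ambrosetti-Rabinowitz calculus from~(b) gives $F(\xx s)\geq F(\xx s_0)(s/s_0)^\theta$ and hence $f(\xx s)\geq\theta F(\xx s_0)s^{\theta-1}/s_0^\theta$ for $s\geq s_0>0$. Fixing a compact $K\subset\Omega^+$ on which $\inf_K F(\x s_0)>0$, Lemma~\ref{kappa2} together with Chebyshev and the $H^1$-bound $\|w\|\leq R$ from~(${\cal{N}}_{iv}$) shows that $\int_K\ap w^2$ is bounded below uniformly in $u\in\nmu$, and integrating the pointwise lower bound on $f$ over $K\cap\{tw\geq s_0\}$ yields $\g(t;w)\geq Ct^{\theta-2}\to+\infty$, uniformly. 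For item~(iv), condition~(${\cal{N}}_{i}$) reads $\|w\|^2=\lambda\int\ap uw+\int\ap f(\x u)w$; since $w\in\{\utp,-\utm,\uhp\}$ is supported where the opposing-sign component vanishes, on the support of $w$ one has $u=w+\bu$, and Lemma~\ref{L_one} combined with~(a) permits replacing $u$ by $w$ in both integrals with $o(1)$ error as $\mu\to\infty$. Using $\int\ap w^2\leq\|w\|^2/\Lambda_1$, this yields $\g(1;w)\geq 2(1-\Lt)\|w\|^2+o(1)\geq(1-\Lt)\kappa_1^2=:\kappa_0$ for $\mu$ large, by Lemma~\ref{L_two}.

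For the delicate step~(v), I start from the formula for $\g'$ in~(iii); the aim is to bound $h(x,s):=f'(x,s)-f(x,s)/s$, positive by~(c), from below in a way uniform over $u\in\nmu$, $t\in[\eta,1/\eta]$, and $x$. Using $\int\ap w^2\geq\kappa_2$ (Lemma~\ref{kappa2}) together with $\|w\|\leq R$, the super-level $\{|w|>M\}$ is controlled by Chebyshev and Sobolev and the sub-level $\{|w|<\delta_0\}$ by the crude bound $\sup\ap\cdot\delta_0^2|\Omega|$, producing $\int_E\ap w^2\geq\kappa_2/2$ on $E:=\{\delta_0\leq|w|\leq M\}$, with $\delta_0>0$ small and $M<\infty$ large both independent of $u\in\nmu$. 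A further localization to a compact $K\subset\Omega^+$ retaining the bulk of $\int_E\ap w^2$ places $(x,tw(x))$, for $x\in K\cap E$ and $t\in[\eta,1/\eta]$, inside the compact set $K\times([-M/\eta,-\eta\delta_0]\cup[\eta\delta_0,M/\eta])\subset\Omega^+\times(\R\setminus\{0\})$, where continuity of $f'$ and~(c) force $h\geq c_1>0$, yielding $\g'(t;w)\geq\eta c_1\kappa_2/4=:\kappa_0'$. The hard part is extracting the compact subregion $K$ and the window $[\delta_0,M]$ independently of the particular $u\in\nmu$, which is what makes essential use of the uniform $H^1$-bound from~(${\cal{N}}_{iv}$) and the uniform $L^2$-lower bound from Lemma~\ref{kappa2}.
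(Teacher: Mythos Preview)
Your proof is correct and follows essentially the same route as the paper, especially for the delicate part~(v): both you and the paper localise to a compact subregion of $\Omega^+$, use the crude $L^2$ bound to discard $\{|w|<\delta_0\}$ and Chebyshev plus the $H^1$-bound to discard $\{|w|>M\}$, so that on the remaining set the quantity $f'(\x tw)-f(\x tw)/(tw)$ is bounded below by compactness and~(c). The one noteworthy difference is in~(iv): you extract $\g(1;w)\geq\kappa_0$ directly from the Nehari identity~(${\cal{N}}_{i}$) combined with $\int\ap w^2\leq\|w\|^2/\Lambda_1$ and Lemma~\ref{L_two}, whereas the paper simply integrates the derivative bound~(v) over $[\eta,1]$---both are valid, and your route makes~(iv) independent of~(v).
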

\begin{proof}
As in (\ref{s}),
consider the set 
\begin{equation}\label{SS3}
\str:=\{x\in\Omega^+:\:\mbox{dist}(x,\R^N\setminus\Omega^+)<1/\underline{n}\},
\end{equation}
where $\underline{n}$ is large enough so that 
$$
|\str|\leq\textstyle\left(\frac{\kappa_2c_{2^*}^2}{2R^2\sup\ap}\right)^{2^*/(2^*-2)}.
$$
We have
\begin{equation}\label{S3}
\int_{\Omega^+\setminus \str}\ap w^2\geq\kappa_2-\int_{\str}\ap w^2
\geq\kappa_2-\sup\ap\frac{R^2}{c_{2^*}^2}|\str|^{(2^*-2)/2^*}\geq\frac{\kappa_2}{2}.
\end{equation}
Let $0<\delta<1$ be fixed. From (b), there exists a constant $c_\delta$ such that
$$ 
\frac{f(\xx u)}{u}\geq c_\delta|u|^{\theta-2}-\delta
\quad\mbox{for}\ u\in\R\ \mbox{and}\ x\in\Omega^+\setminus \str.
$$ 
This gives a lower bound for $\g(t;w)$:
\begin{eqnarray*}
\g(t;w)&\geq&c_\delta t^{\theta-2}\int_{\Omega^+\setminus \str}\ap w^\theta
	-\delta\int_{\Omega^+\setminus \str}\ap w^2\nonumber\\
	&\geq&c_\delta t^{\theta-2}|\ap|_1^{1-\theta/2}
	\left(\int_{\Omega^+\setminus \str}\ap w^2\right)^{\theta/2}
	-\delta\int\ap w^2 \nonumber\\
	&\geq&c_\delta t^{\theta-2}|\ap|_1^{1-\theta/2}
	\frac{\kappa_2^\theta}{2^\theta}-C.
\end{eqnarray*}
The function $\g(t;w)\to+\infty$ as $t\to+\infty$, uniformly in $u\in\nmu$ and $\mu$, with $\mu$ large.
On the other hand, by (a) and the Dominated Convergence Theorem,
$$
\g(t;w)\to \g_0(w):=\int\ap f^\prime(\x 0)w^2\quad\mbox{as}\ t\to 0.
$$
The function $\g$ is strictly increasing
as (c) implies $u\frac{d}{du}\bigl(\frac{f(\x u)}{u}\bigr)>0$.
There exists $\kappa_0>0$, independent of $\mu$ (large) and $u\in\nmu$, such that
\begin{equation}
\g(1;w)-\g_0(w)=\int\ap f(\x w)w-\int\ap f^\prime(\x 0)w^2\geq\kappa_0.\label{g}
\end{equation}
This is a consequence of
\begin{Claim}\label{claim2}
Let $0<\eta<1$ be fixed.
There exists $\kappa_0^\prime>0$, independent of $\mu$ (large) and $u\in\nmu$, such that
\begin{equation}\label{prime}
\g'(t;w)=
\frac{1}{t}\int\ap\left(f^\prime(\x tw)-\frac{f(\x tw)}{tw}\right)w^2
\geq \kappa_0^\prime,\quad\mbox{for}\ t\in[\eta,1/\eta].
\end{equation}
\end{Claim}
\noindent The proof of Claim~\ref{claim2} is given in the Appendix.
\end{proof}
Finally,
the next lemma will be used in connection with (${\cal{N}}_{vii}$):
\begin{Lem}\label{smallbu}
Let $\delta>0$. If 
$\delta$ is sufficiently small and 
$\mu$ is sufficiently large, then for any $u\in\nmu$
with $\iu(u)<\inf_{\nmu}\iu+\delta$,
$$
\|\bu\|\leq 2\sqrt{\delta/\gamma}\quad\mbox{and}\quad\mu\int\am G(\x \bu)\leq 2\delta.
$$
\end{Lem}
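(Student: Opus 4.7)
The plan is to find a competitor $u^{**}\in\nmu$ that vanishes identically on $\Omega^-$ and differs from $u$ on $\Omega^+$ only by the removal of $\bu$ and a small rescaling of the three components governed by (${\cal{N}}_i$). The energy difference $\iu(u)-\iu(u^{**})$ will then isolate, up to a uniform $o(1)$ error, the two quantities $\tfrac{1}{2}\|\bu\|^2$ and $\mu\int\am G(\x\bu)$. Combining this with $\iu(u^{**})\geq\inf_{\nmu}\iu$ and the hypothesis $\iu(u)<\inf_{\nmu}\iu+\delta$ will deliver both claimed bounds.

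Concretely, I would set
$$
u^{**}:=t_1\utp-t_2\utm+t_3\uhp-\uhm+\ub,
$$
with $t_1,t_2,t_3>0$ chosen so that $\iu^\prime(u^{**})$ vanishes on the positive parts $(u^{**})^{\tilde+}=t_1\utp$, $(u^{**})^{\tilde-}=t_2\utm$, $(u^{**})^{\hat+}=t_3\uhp$. On the support of $\utp$ the function $u^{**}$ equals $t_1\utp$, so the defining equation for $t_1$ becomes $\g(t_1;\utp)=\|\utp\|^2-\lambda\int\ap(\utp)^2$. Using $\iu^\prime(u)(\utp)=0$, hypothesis~(a), H\"older's inequality, and Lemma~\ref{L_one}, the right-hand side equals $\g(1;\utp)+o(1)$. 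The strict monotonicity of $\g$ together with the uniform lower bound $\g^\prime(t;\utp)\geq\kappa_0^\prime$ near $t=1$ from Claim~\ref{claim2} then force $t_1=1+o(1)$ uniformly in $u\in\nmu$; the identical argument handles $t_2$ and $t_3$. Verification that $u^{**}\in\nmu$ is then immediate: (${\cal{N}}_{vi}$) and (${\cal{N}}_{vii}$) hold vacuously, since the projection of $u^{**}$ onto $[\hoz(\ot)\oplus\hoz(\ohat)\oplus\hoz(\ob)]^\bot$ is zero; (${\cal{N}}_{ii}$), (${\cal{N}}_{v}$) and (${\cal{N}}_{iii}$) are inherited from $u$ modulo an $o(1)$ error; and (${\cal{N}}_{iv}$) follows from Lemma~\ref{overr} applied to $u$, since $\iu(u)<\inf_{\nmu}\iu+\delta\leq\iu(v)+\delta$ is strictly below the threshold of that lemma for $\delta$ small.

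Expanding the energies, orthogonality in $\hozo$ combined with $t_i=1+o(1)$ gives $\|u\|^2-\|u^{**}\|^2=\|\bu\|^2+o(1)$. Hypothesis~(a), H\"older, and Lemma~\ref{L_one} absorb the remaining cross terms into $o(1)$, namely $\int\ap(u^2-(u^{**})^2)=o(1)$ and $\int\ap[F(\x u)-F(\x u^{**})]=o(1)$. Because $u^{**}\equiv 0$ on $\Omega^-$ while $u=\bu$ there, the $\mu$-term contributes $\mu\int\am G(\x\bu)$. Altogether,
$$
\iu(u)-\iu(u^{**})=\tfrac{1}{2}\|\bu\|^2+\mu\int\am G(\x\bu)+o(1).
$$
Combined with $\iu(u^{**})\geq\inf_{\nmu}\iu$ and $\iu(u)<\inf_{\nmu}\iu+\delta$, this yields $\tfrac{1}{2}\|\bu\|^2+\mu\int\am G(\x\bu)<2\delta$ once $\mu$ is large enough (depending on $\delta$). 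Since $\gamma<1$, the first term forces $\|\bu\|\leq 2\sqrt{\delta/\gamma}$, and the second gives $\mu\int\am G(\x\bu)\leq 2\delta$.

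The main obstacle is the uniform construction of $u^{**}$: showing that the scaling factors $t_i$ exist, are positive, and lie in $1+o(1)$ uniformly over all $u\in\nmu$. This is where the full package of properties of $\g$ collected in the Claim---surjectivity onto values above $\g_0$, strict monotonicity, and the quantitative derivative bound $\kappa_0^\prime$---is needed, hand-in-hand with Lemma~\ref{L_one}, which allows every integral featuring $\bu$ as a pointwise factor to be shown $o(1)$. Once that is in place, the energy expansion is routine.
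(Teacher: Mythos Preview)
Your proposal is correct and follows essentially the same route as the paper. The competitor you call $u^{**}$ is exactly the paper's $\check u=\rt\utp-\st\utm+\that\uhp-\uhm+\ub$; the existence and $1+o(1)$ behavior of the rescaling factors come from the same monotonicity and derivative bound on $\g$ (Claim~\ref{claim2}), membership in $\nmu$ is checked via Lemma~\ref{overr} and the inherited components, and the energy comparison isolates $\tfrac12\|\bu\|^2+\mu\int\am G(\x\bu)$ up to an $o(1)$, just as in~(\ref{lbu}). The only cosmetic difference is that the paper carries $\gamma\|\bu\|^2$ through the last inequality, whereas you keep $\tfrac12\|\bu\|^2$ and invoke $\gamma<1$ at the end.
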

\begin{proof}
For $u\in\nmu$, we do not expect $u-\bu$ to belong
to $\nmu$ because this function might not satisfy (${\cal{N}}_{i}$).
We wish to determine $\rt$, $\st$ and $\that$ such that
$$\check{u}=\rt\utp-\st\utm+\that\uhp-\uhm+\ub$$
satisfies (${\cal{N}}_{i}$). Since $u\in\nmu$,
$$
\begin{array}{lcl}
\displaystyle\|\utp\|^2-\lambda\int\ap(\utp)^2&=&\displaystyle\int\ap f(\x \utp)\utp+o(1),\\
\displaystyle\|\utm\|^2-\lambda\int\ap(\utm)^2&=&\displaystyle-\int\ap f(\x -\utm)\utm+o(1),\\
\displaystyle\|\uhp\|^2-\lambda\int\ap(\uhp)^2&=&\displaystyle\int\ap f(\x \uhp)\uhp+o(1).
\end{array}
$$
The function $\check{u}$ satisfies (${\cal{N}}_{i}$) if
$$
\begin{array}{lcl}
\displaystyle\rt\left(\|\utp\|^2-\lambda\int\ap(\utp)^2\right)&=&\displaystyle\int\ap f(\x \rt\utp)\utp,\\
\displaystyle\st\left(\|\utm\|^2-\lambda\int\ap(\utm)^2\right)&=&\displaystyle-\int\ap f(\x -\st\utm)\utm,\\
\displaystyle\that\left(\|\uhp\|^2-\lambda\int\ap(\uhp)^2\right)&=&\displaystyle\int\ap f(\x \that\uhp)\uhp,
\end{array}
$$
or
$$
\begin{array}{lclcl}
\g(\rt;\utp)&=&\g(1;\utp)+o(1)&>&\g_0(\utp),\\
\g(\st;-\utm)&=&\g(1;-\utm)+o(1)&>&\g_0(-\utm),\\
\g(\that;\utp)&=&\g(1;\uhp)+o(1)&>&\g_0(\uhp).
\end{array}
$$
The last three inequalities (which hold for large $\mu$) follow from
(\ref{g}). The properties of the
functions $\g$ guarantee that the desired
$\rt$, $\st$ and $\that$ do exist and are unique. 
The lower bound (\ref{prime}) allows us to conclude
\begin{equation}\label{rst}
\rt=1+o(1),\quad \st=1+o(1),\quad\that=1+o(1).
\end{equation}
Let $0<\delta<\min\left\{1/2,(1-2/\theta)(1-\Lambda)R^2-\iu(v)\right\}$.
Suppose $u\in\nmu$ with $\iu(u)<\inf_{\nmu}\iu+\delta$.
Choose $0<\overline{R}<\hat{R}<R$ such that
$
\inf_{\nmu}\iu+\delta\leq (1-2/\theta)(1-\Lambda)\overline{R}^2
$.
If $\mu$ is sufficiently large, $\check{u}\in\nmu$ 
because of (\ref{rst}), Lemmas~\ref{overr} and \ref{dois}, and
$\iu(\check{u})\leq\iu(u)+o(1)$.
We obtain
\begin{eqnarray}
\inf_{\nmu}\iu&\!\!\leq\!\!&\iu(\check{u})\leq\iu(\ut+\uh+\ub)+o(1)\nonumber\\
	&\!\!=&\!\!\iu(u)-\oh\|\bu\|^2+\frac{\lambda}{2}\int\ap\bu^2
	+\int\ap F(\x \bu)-\mu\int\am G(\x \bu)+o(1)\nonumber\\
	&\!\!\leq\!\!&\inf_{\nmu}\iu+\delta-\gamma\|\bu\|^2-\mu\int\am G(\x \bu)+o(1).\label{lbu}
\end{eqnarray}
We have used (\ref{rst}).
Inequality (\ref{lbu}) implies Lemma~\ref{smallbu}.
\end{proof}
\section{Existence of least energy solutions}\label{section3}

\setcounter{equation}{0}
For each $u\in\nmu$ we define a
3-dimensional manifold $\mm$ with global
chart 
$\varphi:\R^3_+\to\hozo$, given by
\begin{equation}\label{twelve}
\varphi(\rt,\st,\that)=\rt\utp-\st\utm+\that\uhp-\uhm+\ub+\bu.
\end{equation}
Note $\varphi(1,1,1)=u$. 
\begin{Lem}\label{max} 
If $\mu$ is sufficiently large,
the functional $\left.\iu\right|_{\mm}$ 
has a unique absolute maximum.
This maximum is strict and attained at $u$.
\end{Lem}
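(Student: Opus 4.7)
The plan is to exploit the disjointness of essential supports together with the orthogonality of the decomposition to reduce $\iu|_{\mm}$ to a sum of three independent one-variable functionals. The five building blocks $\utp$, $\utm$, $\uhp$, $\uhm$, $\ub$ have pairwise disjoint essential supports (since $\ot,\ohat,\ob$ are disjoint components of $\Omega^+$ and $\utp\utm=\uhp\uhm=0$ a.e.), $\bu$ is orthogonal to $\hoz(\ot)\oplus\hoz(\ohat)\oplus\hoz(\ob)$, and $\varphi|_{\Omega^-}=\bu$ (so the $\mu$-term contributes a constant). A direct expansion of each term of $\iu\bigl(\varphi(\rt,\st,\that)\bigr)$ then gives a clean decoupling
$$
\iu\bigl(\varphi(\rt,\st,\that)\bigr) = h_1(\rt) + h_2(\st) + h_3(\that) + C,
$$
where $C$ is independent of $(\rt,\st,\that)$ and, for example,
$$
h_1(\rt) = \frac{\rt^2}{2}\Bigl(\|\utp\|^2 - \lambda\int\ap(\utp)^2\Bigr) - \lambda\rt\int\ap\utp\bu - \int_{\{\utp>0\}}\ap F(\x \rt\utp+\bu).
$$
It therefore suffices to prove that each $h_i$ attains its supremum uniquely and strictly at $1$.

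I focus on $h_1$ (the arguments for $h_2$ and $h_3$ are identical). Differentiation combined with $({\cal N}_i)$ gives $h_1'(1)=\iu'(u)(\utp)=0$, so $\rt=1$ is a critical point. Assumption (b) forces $F(\x s)\geq c|s|^\theta$ with $\theta>2$ which, together with $\utp\not\equiv 0$ from Lemma~\ref{L_two}, yields $h_1(\rt)\to-\infty$ as $\rt\to+\infty$; assumption (a) and Lemma~\ref{L_one} give $h_1(0^+)=-\int\ap F(\x\bu)=o(1)$. Hence the supremum is attained at an interior critical point.

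The main obstacle is to pin this critical point down to $\rt=1$. I would compute the second derivative: writing $v:=\utp+\bu$, combining the critical-point identity $\|\utp\|^2-\lambda\int\ap(\utp)^2 = \lambda\int\ap\utp\bu+\int\ap f(\x v)\utp$ with $h_1''(1)=\|\utp\|^2-\lambda\int\ap(\utp)^2-\int\ap f'(\x v)(\utp)^2$, and using the identity $v\utp=(\utp)^2+\bu\utp$, one obtains
$$
h_1''(1) = \int_{\{\utp>0\}}\ap\Bigl[\frac{f(\x v)}{v} - f'(\x v)\Bigr](\utp)^2 + E(\bu),
$$
where $E(\bu)$ collects terms linear in $\bu$. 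Hypothesis (c) makes the bracketed integrand strictly negative, and by Lemmas~\ref{L_two} and~\ref{kappa2} the principal integral is bounded away from $0$ uniformly on $\nmu$; hypothesis (a) and Lemma~\ref{L_one} force $E(\bu)=o(1)$ as $\mu\to+\infty$. Thus $h_1''(1)<0$ for $\mu$ large, so $\rt=1$ is a strict local maximum.

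The same second-derivative computation shows $h_1''(\rt^*)<0$ at every critical point $\rt^*$ in a fixed compact subset of $(0,+\infty)$, so all such critical points are strict local maxima, whence there can be at most one (two strict local maxima on an interval would sandwich a non-maximum critical point). Critical points with $\rt^*\to+\infty$ are excluded by the superlinear dominance that drives $h_1\to-\infty$, and critical points with $\rt^*\to 0^+$ are excluded by noting that $f'(\x 0)=0$ (a consequence of (b)) together with Lemma~\ref{L_one} gives $h_1'(\rt)\sim A\rt>0$ for $\rt$ small, where $A:=\|\utp\|^2-\lambda\int\ap(\utp)^2>0$ by (d). Consequently $\rt=1$ is the unique critical point of $h_1$ and its strict global maximum. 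The analogous conclusion for $h_2$ and $h_3$ shows that $(1,1,1)$ is the unique strict absolute maximum of $\iu|_{\mm}$, attained at $\varphi(1,1,1)=u$.
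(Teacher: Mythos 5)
Your argument is correct and is essentially the paper's: the same decoupling of $\iu\circ\varphi$ into three one-variable functions, the same use of hypothesis (c) to make the second derivative uniformly negative at critical points in a fixed compact parameter range (your claim that the principal integral is bounded away from $0$ uniformly on $\nmu$ is precisely Claim~\ref{claim2}, i.e.\ inequality (\ref{prime})), and the same treatment of all $\bu$-dependent terms as $o(1)$ in $\mu$ --- the paper merely packages this last step by comparing $h$ with the auxiliary function $\underline{h}$ obtained by deleting $\bu$. One small overstatement: since $h_1'(\rt)\to-\lambda\int\ap\utp\bu-\int\ap f(\x\bu)\utp$ as $\rt\to0^+$, an $o(1)$ quantity of either sign, you cannot quite conclude that $\rt=1$ is the \emph{unique critical point} of $h_1$ (spurious critical points may appear at very small $\rt$), but there $h_1=o(1)$, well below $h_1(1)$, so the unique strict absolute maximum conclusion is unaffected.
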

\begin{proof}
To evaluate the functional $\left.\iu\right|_{\mm}$, we introduce
$h:\R^3_+\to\R$,
\begin{eqnarray}
h(\rt,\st,\that)&:=&\iu\circ\varphi\,(\rt,\st,\that)\label{defh}\\ 
&=&\frac{\rt^2}{2}\left(\left\|\utp\right\|^2-\lambda\int\ap(\utp)^2\right)+
\frac{\st^2}{2}\left(\left\|\utm\right\|^2-\lambda\int\ap(\utm)^2\right)\nonumber\\
&&+\frac{\that^2}{2}\left(\left\|\uhp\right\|^2-\lambda\int\ap(\uhp)^2\right)
-\rt\lambda\int\ap\utp\bu+\st\lambda\int\ap\utm\bu\nonumber\\
&&-\that\lambda\int\ap\uhp\bu
-\int\ap F(\x \rt\utp+\bu)-\int\ap F(\x \bu-\st\utm)\nonumber\\
&&-\int\ap F(\x \that\uhp+\bu)+C_2,\nonumber
\end{eqnarray}
with $C_2$ a constant.
From (${\cal{N}}_{i}$), $\nabla h(1,1,1)=0$.
Let $\nu$ designate one of $\rt$, $\st$ or $\that$, and 
accordingly let $w$ designate $\utp$, $-\utm$ or $\uhp$.
When $\nu=1$ and no matter what the values of the other two variables,
\begin{eqnarray}
\left.\frac{\partial^2 h}{\partial\nu^2}\right|_{\nu=1}&=&\|w\|^2-\lambda\int\ap w^2-\int\ap f^\prime(\x w+\bu)w^2\nonumber\\
&=&\|w\|^2-\lambda\int\ap w^2-\int\ap f^\prime(\x w)w^2+o(1).\label{neglet}
\end{eqnarray}
Indeed, (\ref{neglet}) follows from
\begin{Claim}\label{claim3}
For any positive $\delta$, there exists $\mu_\delta$ such that, for all $\mu>\mu_\delta$
and $u\in\nmu$,
$$
\left|\int\ap f^\prime(\x w+\bu)w^2-\int\ap f^\prime(\x w)w^2\right|\leq\delta.
$$
\end{Claim}
\noindent We leave the simple proof to the reader.
Returning to the computation of the second derivative in (\ref{neglet}),
we now use (${\cal{N}}_{i}$) and Lemma~\ref{L_one}, and afterwards (\ref{prime}) for $t=1$:
\begin{eqnarray*}
\left.\frac{\partial^2 h}{\partial\nu^2}\right|_{\nu=1}&=&
-\int\ap f^\prime(\x w)w^2+\int\ap f(\x w)w+o(1)\\
&\leq& -\kappa_0^\prime+o(1)\\ &\leq&-\kappa_0^\prime/2,
\end{eqnarray*}
for $\mu$ sufficiently large. Furthermore, we can find 
$\underline{\nu}<1<\overline{\nu}$, 
independent of $u\in\nmu$ for $\mu$ large, such that
\begin{equation}\label{unov}
\nu\in[\underline{\nu},\overline{\nu}]\quad\Rightarrow\quad\frac{\partial^2 h}{\partial\nu^2}\leq-
\frac{\kappa_0^\prime}{4}.
\end{equation}
So the function $h$ has a strict local maximum at
$(1,1,1)$. The function $h$ differs by an $o(1)$ from $\underline{h}:
\R^3_+\to\R$,
\begin{eqnarray*}
\underline{h}(\rt,\st,\that)&:=&\frac{\rt^2}{2}\left(\left\|\utp\right\|^2-\lambda\int\ap(\utp)^2\right)
+\frac{\st^2}{2}\left(\left\|\utm\right\|^2-\lambda\int\ap(\utm)^2\right)\\
&&+\frac{\that^2}{2}\left(\left\|\uhp\right\|^2-\lambda\int\ap(\uhp)^2\right)+C_2\\
&&-\int\ap F(\x \rt\utp)-\int\ap F(\x -\st\utm)-\int\ap F(\x \that\uhp).
\end{eqnarray*}
For large $\mu$, $\underline{h}$ also must have a strict local maximum
in $[\underline{\nu},\overline{\nu}]^3$, say at $(\rt_1,\st_1,\that_1)$ (dependent
on $\mu$ and $u$, of course).
It is simple to check using (c) that if
$\partial\underline{h}/\partial\nu=0$, then
$\partial^2\underline{h}/\partial\nu^2<0$. This implies that
$\partial\underline{h}/\partial\nu>0$ for $\nu<\check{\nu}$ and 
$\partial\underline{h}/\partial\nu<0$ for $\nu>\check{\nu}$.
Again, we use the fact that $h$ is uniformly close to $\underline{h}$
to see that the maximum of $h$ at $(1,1,1)$ is unique and absolute.
We have proved Lemma~\ref{max}.
\end{proof}
\begin{Prop}\label{propo}
Let $(u_n)$ be a minimizing sequence for $\iu$ restricted to $\nmu$.
Then, modulo a subsequence, for sufficiently large $\mu$, $u_n\to u$ in $\hozo$ and $u$ is a minimizer.
\end{Prop}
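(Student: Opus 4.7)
The plan is a weak-compactness argument: show that any minimizing sequence in $\nmu$ converges weakly to some $u$, then promote weak to strong $\hozo$ convergence by coupling a Nehari rescaling of the weak limit with the uniform curvature estimate~(\ref{unov}) from the proof of Lemma~\ref{max}.

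\textbf{Step 1 (Weak compactness).} Set $\delta_n := \iu(u_n) - \inf_{\nmu}\iu \to 0$. For $n$ large, $\delta_n$ lies below the thresholds of Lemmas~\ref{dois} and~\ref{smallbu}, so
\[
\|\uhmn\|,\ \|\ubn\|,\ \|\bun\| \leq 2\sqrt{\delta_n/\gamma} \longrightarrow 0.
\]
Combined with $(\mathcal{N}_{iv})$, this makes $(u_n)$ bounded in $\hozo$. Up to a subsequence, $u_n \rightharpoonup u$ weakly in $\hozo$ and $u_n \to u$ strongly in $L^p(\Omega)$ by Rellich--Kondrachov; the same holds for each orthogonal component. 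The vanishing components force $\uhm = \ub = \bu = 0$, so $u = \utp - \utm + \uhp$ with disjoint supports.

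\textbf{Step 2 (Limits of the Euler--Lagrange identities).} Using disjoint supports, Lemma~\ref{L_one} to absorb $\bun$, and the subcritical growth in~(a), each of the three identities in $(\mathcal{N}_i)$ passes to the limit as
\[
L_w := \lim_n\|w_n\|^2 = \lambda\int\ap w^2 + \int\ap f(\x w)w, \qquad w \in \{\utp,-\utm,\uhp\}.
\]
Weak lower semicontinuity gives $L_w \geq \|w\|^2$. If equality holds for every $w$, each component converges strongly in $\hozo$, hence $u_n \to u$ strongly; Lemma~\ref{L_two} provides $\|w\| \geq \kappa_1 > 0$, whence $(\mathcal{N}_{ii})$; and the remaining conditions $(\mathcal{N}_{iii})$--$(\mathcal{N}_{vii})$ are closed under strong convergence, so $u \in \nmu$ and $\iu(u) = \lim\iu(u_n) = \inf_{\nmu}\iu$.

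\textbf{Step 3 (Excluding loss of mass).} Suppose for contradiction that $L_{w_0} > \|w_0\|^2$ for some $w_0$. The strict monotonicity of $\g$ produces a unique triple $(\rt,\st,\that) \in (0,1]^3$, bounded away from $0$ and $\infty$ uniformly, with $\nu_{w_0} < 1$ strictly, such that $u^\star := \rt\utp - \st\utm + \that\uhp$ satisfies $(\mathcal{N}_i)$; checking $(\mathcal{N}_{ii})$--$(\mathcal{N}_{vii})$ is routine (Steps~1--2 and $\nu_w \leq 1$), so $u^\star \in \nmu$. Consider the companion sequence
\[
u^\star_n := \rt\utpn - \st\utmn + \that\uhpn - \uhmn + \ubn + \bun,
\]
sitting on the manifold $\mm$ attached to $u_n$. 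The uniform curvature bound~(\ref{unov}) integrates to
\[
\iu(u_n) - \iu(u^\star_n) \geq \tfrac{\kappa_0'}{8}\bigl[(\rt-1)^2 + (\st-1)^2 + (\that-1)^2\bigr].
\]
On the other hand, strong $L^p$ convergence $u^\star_n \to u^\star$ and the orthogonal decomposition of $\|u^\star_n\|^2$ give
\[
\lim_n \iu(u^\star_n) = \iu(u^\star) + \tfrac{1}{2}\bigl[\rt^2(L_1-\|\utp\|^2) + \st^2(L_2-\|\utm\|^2) + \that^2(L_3-\|\uhp\|^2)\bigr] \geq \iu(u^\star) \geq \inf_{\nmu}\iu,
\]
where $L_1, L_2, L_3$ denote $L_{\utp}, L_{-\utm}, L_{\uhp}$. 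Combined with $\iu(u_n) \to \inf_{\nmu}\iu$, the two displays force
\[
0 < \tfrac{\kappa_0'}{8}\bigl[(\rt-1)^2 + (\st-1)^2 + (\that-1)^2\bigr] \leq \lim_n[\iu(u_n) - \iu(u^\star_n)] \leq 0,
\]
a contradiction. Hence $L_w = \|w\|^2$ for every $w$, and Step~2 closes the argument.

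\textbf{Main obstacle.} Since $\nmu$ is not weakly closed, one must preclude $\hozo$-loss of mass in the three principal components. The Nehari rescaling of the weak limit together with the uniform second-order bound~(\ref{unov}) from Lemma~\ref{max} is precisely what pins the rescaling parameters at $(1,1,1)$, which is exactly the desired norm convergence.
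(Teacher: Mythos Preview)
Your Step~1 contains a genuine error that undermines the rest of the argument. You apply Lemmas~\ref{dois} and~\ref{smallbu} with $\delta=\delta_n\to 0$ to conclude $\|\uhmn\|,\|\ubn\|,\|\bun\|\to 0$. But read those lemmas again: their hypothesis is ``if $\mu$ is sufficiently large'', and \emph{how large depends on $\delta$} (this is where the $o(1)$ terms in their proofs are absorbed). Here $\mu$ is fixed; you cannot let $\delta\to 0$ after the fact. For a fixed large $\mu$ the lemmas only give a fixed small upper bound, not a vanishing one. In fact the minimizer $u_\mu$ is expected to have nonzero $\uhm$, $\ub$, $\bu$ components for finite $\mu$; these vanish only in the limit $\mu\to\infty$ (which is exactly the content of the later part of Theorem~\ref{thm}).

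This error propagates. Your Step~2 identity $L_w=\lambda\int\ap w^2+\int\ap f(\x w)w$ is obtained by assuming $\bu=0$ so that $u=w$ on the support of $w$; with $\bu\neq 0$ the correct limit involves $f(\x w+\bu)$, and the clean monotone-$\g$ argument of Step~3 no longer applies directly. Likewise your rescaled function $u^\star=\rt\utp-\st\utm+\that\uhp$ drops $-\uhm+\ub+\bu$, so it need not satisfy $(\mathcal{N}_i)$ once those components are restored, and the verification of $(\mathcal{N}_{vi})$, $(\mathcal{N}_{vii})$ is no longer ``routine'' since $\bu\neq 0$. The paper's proof keeps the tail $-\uhm+\ub+\bu$ in the rescaled function $\check{u}$ throughout, checks $(\mathcal{N}_{vi})$ and $(\mathcal{N}_{vii})$ for $\check{u}$ via Lemmas~\ref{L_one}, \ref{kappa2} and~\ref{smallbu} applied with a \emph{fixed} $\delta$ chosen in terms of $\nu_0$, $\kappa_2$, $\Lambda_1$, and uses Lemma~\ref{max} (the global maximum, not just the curvature bound~(\ref{unov}), which would in addition require $(\rt,\st,\that)\in[\underline{\nu},\overline{\nu}]^3$) to get the energy inequality. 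The overall shape of your Step~3 is the right one; the fix is to keep the tail components and redo Steps~2--3 accordingly.
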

\begin{proof}
Let $(u_n)$ be a minimizing sequence for $\iu$ restricted to $\nmu$,
$u_n\rightharpoonup u$ in $\hozo$. Let $w_n$ be $\utpn$, $-\utmn$ or
$\uhpn$ and, accordingly, let $w$ be $\utp$, $-\utm$ or $\uhp$
and $\nu$ be $\rt$, $\st$ or $\that$.
Suppose that 
\begin{equation}\label{gap}
\|w\|<\liminf\|w_n\|.
\end{equation}
Lemma~\ref{kappa2} gives
\begin{equation}\label{lbw}
\|w\|\geq\kappa_2^{1/2}\Lambda_1^{1/2}.
\end{equation}
The function $u$ will not
satisfy (${\cal{N}}_{i}$) because
$$
\|w\|^2-\lambda\int\ap w^2-\lambda\int\ap\bu w-\int\ap f(\x w+\bu)w<0.
$$ 
We define the value 
$$
\nu_0=\textstyle\frac{1}{R}\left(\frac{\gamma c_p^{p}}
{\sup\ap C_1}\right)^{1/(p-2)}=\frac{\rho}{R}.
$$
As in (\ref{ap3}),
$$
\begin{array}{l}
\displaystyle\nu_0^2\left(\|w\|^2-\lambda\int\ap w^2\right)-\nu_0\left(\lambda\int\ap\bu w-\int\ap f(\x \nu_0 w+\bu)w\right)\\
\displaystyle\qquad\qquad =\nu_0^2\left(\|w\|^2-\lambda\int\ap w^2-\int\ap\frac{f(\x \nu_0 w)}{\nu_0}w\right)\\
\displaystyle\qquad\qquad\quad-\nu_0\left(\lambda\int\ap\bu w+\int\ap f(\x \nu_0 w+\bu)w-\int\ap f(\x \nu_0 w)w\right)\\
\vspace{1mm}
\displaystyle\qquad\qquad\geq\nu_0^2\left(\|w\|^2-{\textstyle\frac{3\Lt+1}{4}}\|w\|^2-C_1\nu_0^{p-2}\int\ap|w|^p\right)+o(1)\\
\displaystyle\qquad\qquad\geq 2\gamma\nu_0^2\kappa_1^2+o(1)\\
\displaystyle\qquad\qquad>0,
\end{array}
$$
for large $\mu$. By continuity, there will exist $\nu_1\in\,]\nu_0,1[$\, such that 
$$\nu_1\left(\|w\|^2-\lambda\int\ap w^2\right)-\lambda\int\ap\bu w-\int\ap f(\x \nu_1 w+\bu)w=0.$$
(The value of $\nu_1$ depends on $w$, but $\nu_0$ is fixed.)
Hence there exist
\begin{equation}\label{range}
\rt_1,\ \st_1,\ \that_1\in\ ]\nu_0,1]
\end{equation}
such that
$$\check{u}:=\rt_1\utp-\st_1\utm+\that_1\uhp-\uhm+\ub+\bu$$
satisfies (${\cal{N}}_{i}$). It also satisfies (${\cal{N}}_{ii}$) 
because of (\ref{lbw}).
We estimate the energy of $\check{u}$ using
(\ref{gap}) and Lemma~\ref{max} applied to $u_n$:
\begin{eqnarray}
\iu(\check{u})&<&\liminf\iu(\rt_1\utpn-\st_1\utmn+\that_1\uhpn-\uhmn+\ubn+\bun)\label{contra}\\
&\leq&\lim\iu(u_n)\nonumber\\
&=&\inf\left.\iu\right|_{\nmu}.\nonumber
\end{eqnarray}
So $\check{u}$ satisfies (${\cal{N}}_{iii}$).
The function $\check{u}$ satisfies (${\cal{N}}_{iv}$)
because of (\ref{range}), and it clearly satisfies (${\cal{N}}_{v}$). 
It satisfies (${\cal{N}}_{vi}$) for large $\mu$ because
of Lemmas~\ref{L_one} and~\ref{kappa2} and of the strong convergence
of $u_n$ to $u$ in $L^2(\Omega)$. 
Applying Lemma~\ref{smallbu} to $u_n$ for large $n$, with 
$\delta=\nu_0^2\kappa_2{\Lambda}_1/(4\gamma)$ 
and $\mu$ sufficiently large,  and using (\ref{lbw}) and the weak
lower semi-continuity of $\|\bun\|$, we obtain
$\|\bu\|\leq\nu_0\kappa_2^{1/2}{\Lambda}_1^{1/2}\leq\nu_0\min\{\|\utp\|,\|\utm\|,\|\uhp\|\}$.
So $\check{u}$ also satisfies (${\cal{N}}_{vii}$). In conclusion,
$\check{u}$ belongs to $\nmu$ and inequality (\ref{contra}) is impossible.
Thus, $\|w\|=\liminf\|w_n\|$ and $u\in\nmu$ is a minimizer of $\iu$ restricted
to $\nmu$. In fact, if $\|u\|$ were to be smaller than $\liminf\|u_n\|$,
due to a drop in $\|\uhmn\|$, $\|\ubn\|$ or $\|\bun\|$ upon passing to the
limit, then we would still have strict inequality in (\ref{contra}),
and again a contradiction. We have proved Proposition~\ref{propo}.
\end{proof}
\begin{Prop}\label{grau}
If $\mu$ is sufficiently large, every minimizer of $\iu$ on
$\nmu$ is a critical point of $\iu$.
\end{Prop}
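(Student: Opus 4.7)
My plan is a standard deformation argument adapted to the fact that $\nmu$ is not a manifold, with the three-dimensional manifold $\mm$ of Lemma~\ref{max} playing the role of a local section on which $\iu$ has a strict maximum at the candidate minimizer. I would argue by contradiction: assume $u\in\nmu$ minimizes $\iu$ but $\iu'(u)\neq 0$. By continuity there exist $r,\beta>0$ with $\|\iu'(w)\|\geq\beta$ on $B_r(u)\subset\hozo$, and a locally Lipschitz pseudo-gradient vector field for $\iu$, cut off outside $B_r(u)$, yields a deformation $\eta:[0,t_0]\times\hozo\to\hozo$ which is the identity off $B_r(u)$, satisfies $\eta(0,\cdot)=\mathrm{id}$, and obeys $\iu(\eta(t_0,w))\leq\iu(w)-\oh\beta t_0$ on $B_{r/2}(u)$ for some $t_0>0$.

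Next I would combine $\eta$ with the chart $\varphi$ of Lemma~\ref{max}. Fix $\underline{\nu}<1<\overline{\nu}$ provided by (\ref{unov}) and set
\[
\Phi(\rt,\st,\that):=\eta\bigl(t_0,\varphi(\rt,\st,\that)\bigr),\qquad(\rt,\st,\that)\in[\underline{\nu},\overline{\nu}]^3.
\]
On $\partial[\underline{\nu},\overline{\nu}]^3$ at least one of $\rt,\st,\that$ differs from $1$ by a fixed amount; combining this with the lower bounds on $\|\utp\|,\|\utm\|,\|\uhp\|$ from Lemma~\ref{L_two}, one may shrink $r$ so that $\varphi(\partial[\underline{\nu},\overline{\nu}]^3)\subset\hozo\setminus B_r(u)$ and therefore $\Phi=\varphi$ on the boundary of the cube. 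The strict absolute maximum of $\iu\circ\varphi$ at $(1,1,1)$ given by Lemma~\ref{max}, together with the energy drop produced by $\eta$ near $(1,1,1)$, yields $\iu\circ\Phi<\iu(u)$ throughout the cube.

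The decisive step is to locate $(\rt_\ast,\st_\ast,\that_\ast)$ in the cube such that $\Phi(\rt_\ast,\st_\ast,\that_\ast)\in\nmu$. The inequality conditions (${\cal{N}}_{ii}$)--(${\cal{N}}_{vii}$) are open and survive the small perturbation by Lemmas~\ref{L_two}, \ref{overr}, \ref{dois}, \ref{kappa2} and~\ref{smallbu}, so the substantive task is to enforce (${\cal{N}}_{i}$). Writing the orthogonal decomposition $\Phi=\tilde{\Phi}^+-\tilde{\Phi}^-+\hat{\Phi}^+-\hat{\Phi}^-+\bar{\Phi}+\underline{\Phi}$, define
\[
\Psi(\rt,\st,\that):=\bigl(\iu'(\Phi)(\tilde{\Phi}^+),\,-\iu'(\Phi)(\tilde{\Phi}^-),\,\iu'(\Phi)(\hat{\Phi}^+)\bigr)\in\R^3.
\]
On $\partial[\underline{\nu},\overline{\nu}]^3$, where $\Phi=\varphi$, a direct computation gives $\Psi=\mathrm{diag}(\rt,\st,\that)\,\nabla h$, and the uniform concavity (\ref{unov}) together with $\nabla h(1,1,1)=0$ pins down the standard sign pattern (positive on the lower faces, negative on the upper faces in each coordinate). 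The straight-line homotopy from $\Psi|_{\partial}$ to $-\bigl((\rt,\st,\that)-(1,1,1)\bigr)$ avoids zero throughout, so the Brouwer degree of $\Psi$ on the cube is $\pm 1$. An interior zero $(\rt_\ast,\st_\ast,\that_\ast)$ then yields $\Phi(\rt_\ast,\st_\ast,\that_\ast)\in\nmu$ with $\iu$-value strictly below $\iu(u)$, contradicting minimality.

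The main obstacle I anticipate is the simultaneous calibration of the constants $r$, $t_0$ and the cube $[\underline{\nu},\overline{\nu}]^3$: the cube must be small enough for the concavity (\ref{unov}) to hold and to keep $\Phi$ inside the regime where the open conditions (${\cal{N}}_{ii}$)--(${\cal{N}}_{vii}$) survive, while $r$ must be small enough that $\varphi$ of the cube's boundary escapes $B_r(u)$ (so that $\Phi=\varphi$ on the boundary and the degree computation is legitimate). All of this can be arranged thanks to the uniformity in $u\in\nmu$ of the estimates of Section~\ref{section2}, but threading these scales together cleanly is the delicate part of the argument.
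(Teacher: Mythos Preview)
Your proposal is correct and follows essentially the same route as the paper: a contradiction argument combining a cut-off pseudo-gradient flow with the three-dimensional section $\mm$ of Lemma~\ref{max}, then a Brouwer degree computation on a cube about $(1,1,1)$ to recover condition (${\cal{N}}_{i}$) for the deformed map, while Lemmas~\ref{L_two}--\ref{smallbu} guarantee the open conditions (${\cal{N}}_{ii}$)--(${\cal{N}}_{vii}$). The only cosmetic differences are that the paper passes to a possibly smaller cube $[\nu_2,\nu_3]^3\subset[\underline{\nu},\overline{\nu}]^3$ before computing the degree and reads off $\deg=-1$ directly from the sign pattern forced by (\ref{unov}), whereas you keep $[\underline{\nu},\overline{\nu}]^3$ and use a straight-line homotopy to $-\bigl((\rt,\st,\that)-(1,1,1)\bigr)$; since $h$ separates in the three variables and each $\partial h/\partial\nu$ vanishes only at $\nu=1$, both computations are immediate.
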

\begin{proof}
Let $\mu$ be large enough so that Proposition~\ref{propo} holds.
Let $u$ be a minimizer of $\iu$ restricted to $\nmu$.
Consider the maps $\jtp$, $\jtm$, $\jhp:\hozo\to\R$ defined by
$$ 
\jtp(z)=\iu'(z)\ztp,\quad\jtm(z)=-\iu'(z)\ztm,\quad\jhp(z)=\iu'(z)\zhp,
$$ 
and
$J_\mu:[\underline{\nu},\overline{\nu}]^3\to\R^3$ defined by
$$
J_\mu(\rt,\st,\that)=(\jtp,\jtm,\jhp)\circ\varphi\,(\rt,\st,\that)=
\left(\rt\frac{\partial h}{\partial\rt},\st\frac{\partial h}{\partial\st},
\that\frac{\partial h}{\partial\that}\right).
$$
Here the maps $\varphi$ and $h$ are the ones corresponding to $u$ as in (\ref{twelve})
and (\ref{defh}).
Using (\ref{unov}) and $\nabla h(1,1,1)=0$, we can find $\nu_2$ and $\nu_3$, 
independent of $\mu$, with
$\underline{\nu}\leq\nu_2<1$ and $1<\nu_3\leq\overline{\nu}$, such that
\begin{equation}\label{desce}
(\rt,\st,\that)\in[\nu_2,\nu_3]^3\quad\Rightarrow\quad\frac{\partial (\jtp\circ\varphi)}{\partial\rt},\
\frac{\partial (\jtm\circ\varphi)}{\partial\st},\ \frac{\partial (\jhp\circ\varphi)}{\partial\that}
\leq-\frac{\kappa_0^\prime}{8}.
\end{equation}
It follows that on the boundary of
$[\nu_2,\nu_3]^3$ either one of the components of $J_\mu$ is greater than 
$\frac{\kappa_0^\prime}{8}(1-\nu_2)$,
or one of the components of $J_\mu$ is less than $-\frac{\kappa_0^\prime}{8}(\nu_3-1)$ and
$$
\mbox{deg}\,\left(J_\mu,[\nu_2,\nu_3]^3,0\right)=-1.
$$
Suppose that $\iu^\prime(u)\neq 0$. Let 
$B_{\hat{\rho}}(u):=\{z\in\hozo:\:\left\|z-u\right\|<{\hat{\rho}}\}$.
Choose ${\hat{\rho}}>0$ satisfying $\iu^\prime(z)\neq 0$ for all $z\in B_{\hat{\rho}}(u)$,
\begin{equation}\label{peq}
\hat{\rho}<\mbox{dist}\left(u,\varphi\left(\R^3_+\setminus[\nu_2,\nu_3]^3\right)\right),
\end{equation}
and so that conditions (${\cal{N}}_{ii}$)$-$(${\cal{N}}_{vii}$) hold for all $z\in B_{\hat{\rho}}(u)$.
This is possible because $u\in\nmu$ ((${\cal{N}}_{ii}$)),
$\iu(u)\leq\iu(v)$
((${\cal{N}}_{iii}$)),
of Lemma~\ref{overr} ((${\cal{N}}_{iv}$)),
of Lemma~\ref{dois} ((${\cal{N}}_{v}$)),
of Lemmas~\ref{L_one} and \ref{kappa2} ((${\cal{N}}_{vi}$)),
and of Lemmas~\ref{L_two} and \ref{smallbu} ((${\cal{N}}_{vii}$)).
Note that the choice of ${\hat{\rho}}$ might depend on $\mu$.
Let $\phi:\hozo\to[0,1]$ be Lipschitz,
$\phi=1$ on $B_{{\hat{\rho}}/2}(u)$ and $\phi=0$ on
$\hozo\setminus B_{\hat{\rho}}(u)$ and
let $K_\mu:B_{\hat{\rho}}(u)\to\hozo$ be a pseudogradient vector field for
$\iu^\prime$ on $B_{\hat{\rho}}(u)$.
Consider the Cauchy problem
$$ 
\left\{\begin{array}{l}
\displaystyle\frac{d\eta}{d\tau}=-\phi(\eta)K_\mu(\eta),\\
\vspace{-4mm}\\
\eta(0)=z,\end{array}\right.
$$ 
for $z\in\hozo$; by definition, $\phi K_\mu$ is zero outside
$B_{\hat{\rho}}(u)$. We denote the solution of this Cauchy problem
by $\eta(\tau;z)$. 
For $\tau>0$, let
$$
\varphi_\tau(\rt,\st,\that)=\eta(\tau;\varphi(\rt,\st,\that)).
$$
Each $\varphi_\tau$ is continuous and, due to
(\ref{peq}), 
$$
\left.\varphi_\tau\right|_{\partial\left([\nu_2,\nu_3]^3\right)}=
\left.\varphi\right|_{\partial\left([\nu_2,\nu_3]^3\right)}
$$
and so
$$
\mbox{deg}\,\left(J_\mu^\tau,\left[\nu_2,\nu_3\right]^3,0\right)
=\mbox{deg}\,\left(J_\mu,[\nu_2,\nu_3]^3,0\right)=
-1,
$$
where
$$
J_\mu^\tau(\rt,\st,\that):=
\left(\jtp,\jtm,\jhp\right)\circ\varphi_\tau(\rt,\st,\that).
$$
It follows that there exists some $(\rt_1,\st_1,\that_1)\in\,]\nu_2,\nu_3[^3$,
with $\varphi_\tau(\rt_1,\st_1,\that_1)$ satisfying (${\cal{N}}_{i}$).
The function $\varphi_\tau(\rt_1,\st_1,\that_1)$ has to belong to $B_{\hat{\rho}}(u)$
as outside $B_{\hat{\rho}}(u)$ the maps $\varphi$ and $\varphi_\tau$ coincide,
and $\varphi$ only satisfies (${\cal{N}}_{i}$) in $[\nu_2,\nu_3]^3$
at the point $(1,1,1)$. This is a consequence of (\ref{desce}).
But on $B_{\hat{\rho}}(u)$
conditions (${\cal{N}}_{ii}$)$-$(${\cal{N}}_{vii}$) hold,
so  $\varphi_\tau(\rt_1,\st_1,\that_1)$ belongs to $\nmu$.
By Lemma~\ref{max}, the maximum of $\iu\circ\varphi$ is
strict and attained at $(1,1,1)$.
For $\tau>0$, $\max\iu\circ\varphi_\tau<
\iu(u)=\left.\min\iu\right|_{\nmu}$. This contradicts 
$\varphi_\tau(\rt_1,\st_1,\that_1)\in\nmu$. We have proved Proposition~\ref{grau}.
\end{proof}
\begin{altproof}{Theorem~\ref{thm}}
By Propositions~\ref{propo} and \ref{grau}, there exists $\check{\mu}$ such that
for $\mu>\check{\mu}$ the equation (\ref{one})
has an $\hozo$ weak solution $u_\mu$.
Suppose $\mu_n\to+\infty$ and $u_{\mu_n}$ is a minimizer of $I_{\mu_n}$
restricted to ${\cal{N}}_{\mu_n}$. Modulo a subsequence,
$$
u_{\mu_n}\rightharpoonup u\quad\mbox{in}\ \hozo.
$$
It is clear from Lemmas~\ref{dois} and \ref{smallbu} that
$$
u=\ut+\uhp,
$$
and
\begin{eqnarray}
I_{\mu_n}(u_{\mu_n})&=&\oh\|\tilde{u}_{\mu_n}\|^2-\frac{\lambda}{2}\int\ap(\tilde{u}_{\mu_n})^2+
\oh\|\hat{u}^+_{\mu_n}\|^2-\frac{\lambda}{2}\int\ap(\hat{u}^+_{\mu_n})^2\nonumber\\
&&-\int\ap F(\x \tilde{u}_{\mu_n})-\int\ap F(\x \hat{u}^+_{\mu_n})+o(1).\label{iinf}
\end{eqnarray}
Obviously from Lemma~\ref{kappa2}
$$
\utp, \utm, \uhp\not\equiv 0.
$$
Suppose that either one of the two inequalities
\begin{equation}\label{cai}
\|\ut\|<\liminf\|\tilde{u}_{\mu_n}\|\quad\mbox{or}\quad
\|\uhp\|<\liminf\|\hat{u}^+_{\mu_n}\|
\end{equation}
is satisfied. Then 
\begin{eqnarray*}
I_0(u)&=&\oh\|\tilde{u}\|^2-\frac{\lambda}{2}\int\ap\tilde{u}^2+
\oh\|\hat{u}^+\|^2-\frac{\lambda}{2}\int\ap(\hat{u}^+)^2\\ 
&&-\int\ap F(\x \tilde{u})-\int\ap F(\x \hat{u}^+)\\ 
&<&\liminf I_{\mu_n}(u_{\mu_n}).
\end{eqnarray*}
We can argue as above to prove that there exists
$(\rt,\st,\that)\in\,]0,1]^3\setminus\{(1,1,1)\}$
such that $\check{u}:=\rt\utp-\st\utm+\that\uhp$
satisfies (${\cal{N}}_{i}$).
The function $\check{u}$ also satisfies (${\cal{N}}_{ii}$).
Using first the hypothesis that one of the inequalities (\ref{cai}) is strict,
then Lemmas~\ref{dois} and \ref{smallbu}, and finally
Lemma~\ref{max} applied to $u_{\mu_n}$,
\begin{eqnarray}
I_{\mu_n}(\check{u})&<&\liminf I_{\mu_n}\left(\rt\tilde{u}^+_{\mu_n}-
\st\tilde{u}^-_{\mu_n}+\that\hat{u}^+_{\mu_n}\right)\nonumber\\
&=&\liminf I_{\mu_n}\left(\rt\tilde{u}^+_{\mu_n}-
\st\tilde{u}^-_{\mu_n}+\that\hat{u}^+_{\mu_n}-\hat{u}^-_{\mu_n}+
\bar{u}_{\mu_n}+\underline{u}_{\,\mu_n}\right)\nonumber\\
&\leq&\liminf I_{\mu_n}\left(u_{\mu_n}\right)\nonumber\\
&=&\liminf\min\left.I_{\mu_n}\right|_{{\cal{N}}_{\mu_n}}.\label{infinito}
\end{eqnarray}
The function $\check{u}$ also satisfies (${\cal{N}}_{iii}$).
Obviously, $\check{u}$ satisfies (${\cal{N}}_{iv}$)$-$(${\cal{N}}_{vii}$).
Thus $\check{u}$ belongs to ${\cal{N}}_{\mu_n}$. This contradicts
(\ref{infinito}) and proves that
$$
u_{\mu_n}\to u\quad\mbox{in}\ \hozo.
$$
This proves (\ref{fifteen}).
Also, from (\ref{iinf}),
\begin{equation}\label{key}
I_0(u)=\lim I_{\mu_n}\left(u_{\mu_n}\right).
\end{equation}
The proof of Theorem~\ref{thm} will be complete once we prove
\begin{Claim}\label{last}
Let $u$ be as in {\rm (\ref{fifteen})}.
The function $u|_{\ot}$ is a least energy nodal solution in $H^1_0(\ot)$ of\/ {\rm (\ref{one})},
and the function $u|_{\ohat}$ is a least energy positive solution in $H^1_0(\ohat)$ of\/ {\rm (\ref{one})}.
\end{Claim}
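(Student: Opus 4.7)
The plan is to combine the energy decomposition recorded in~(\ref{iinf})--(\ref{key}) with an upper bound produced by a conveniently chosen test function in $\nmun$. Write $u_1:=u|_{\ot}$ and $u_2:=u|_{\ohat}$, and denote by $J_1,J_2$ the natural energy functionals associated with the local Dirichlet problems on $\ot$ and $\ohat$.

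First I would verify that $u_1\in H^1_0(\ot)$ and $u_2\in H^1_0(\ohat)$ are weak solutions of $-\Delta w=\ap[\lambda w+f(\x w)]$ on their respective components, with $u_1$ nodal and $u_2$ strictly positive. For any $\phi\in H^1_0(\ot)$ extended by zero to $\hozo$, the support of $\phi$ lies in $\Omega^+$, so the $\mu_n$-term in $I_{\mu_n}'(u_{\mu_n})(\phi)=0$ vanishes identically; strong $\hozo$-convergence $u_{\mu_n}\to u$ then passes to the limit in the remaining terms and gives the local weak formulation for $u_1$, and the same argument handles $u_2$. Lemma~\ref{kappa2} together with strong $L^2$-convergence of the orthogonal components gives $\utp,\utm,\uhp\not\equiv 0$, so $u_1$ is nodal, and the strong maximum principle applied to the nonnegative $u_2=\uhp|_{\ohat}$ gives strict positivity.

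Passing $n\to\infty$ in~(\ref{iinf}) and using~(\ref{key}) then yields the decomposition $I_0(u)=J_1(u_1)+J_2(u_2)$. Let $c_1$ be the infimum of $J_1$ over nodal solutions in $H^1_0(\ot)$ and $c_2$ the infimum of $J_2$ over positive solutions in $H^1_0(\ohat)$; since $u_1$ is nodal and $u_2$ positive, the lower bound $I_0(u)\geq c_1+c_2$ is immediate. For the matching upper bound, I would pick any nodal solution $z_1\in H^1_0(\ot)$ and any positive solution $z_2\in H^1_0(\ohat)$ and form $z:=z_1+z_2\in\hozo$ by extending each by zero. Its orthogonal components are $\tilde z=z_1$, $\hat z=z_2$, $\bar z=\underline z=0$. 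Because $z_1,z_2$ solve the local equations, $z$ satisfies~(${\cal{N}}_i$) exactly; (${\cal{N}}_{ii}$) holds by construction; and (${\cal{N}}_v$)--(${\cal{N}}_{vii}$) are trivial since the relevant components vanish. The substantive constraints are (${\cal{N}}_{iii}$) and (${\cal{N}}_{iv}$), and both can be arranged by choosing the reference function $v$ of Definition~\ref{L_three} with $\iu(v)=I_0(v)$ sufficiently large (note $\iu(v)$ is $\mu$-independent because $v$ is supported in $\Omega^+$) and $R$ correspondingly large at the very outset of the construction. With $z\in\nmun$ secured, minimality of $u_{\mu_n}$ gives $I_{\mu_n}(u_{\mu_n})\leq I_{\mu_n}(z)=I_0(z)=J_1(z_1)+J_2(z_2)$, and passing $n\to\infty$ via~(\ref{key}) followed by taking the infimum over $z_1,z_2$ gives $I_0(u)\leq c_1+c_2$. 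Combined with the lower bound, equality holds throughout, forcing $J_1(u_1)=c_1$ and $J_2(u_2)=c_2$.

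The step I expect to be the main obstacle is precisely the verification that $z$ sits inside the fixed set $\nmun$. This is not a substantively nonlinear difficulty but a bookkeeping one regarding the free parameters $v$ and $R$ in Definition~\ref{L_three}, which must be chosen at the start of Section~\ref{section2} large enough to accommodate the near-minimal competitors used only here. Once that is arranged, the two least-energy characterizations of $u_1$ and $u_2$ drop out of a single squeeze argument.
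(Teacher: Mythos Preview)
Your overall strategy is the same as the paper's: produce a competitor supported in $\Omega^+$, verify it lies in $\nmun$, and compare its energy with $\lim I_{\mu_n}(u_{\mu_n})=I_0(u)$. The paper simply packages this as a contradiction rather than a squeeze: it assumes some $\upsilon=\tilde\upsilon+\hat\upsilon^+$ (nodal on $\ot$, positive on $\ohat$, a solution on each) has $I_0(\upsilon)<I_0(u)$, and checks $\upsilon\in\nmun$. The point worth noting is how the paper disposes of the very obstacle you flag. From $(\mathcal{N}_i)$ for $\upsilon$ one gets
\[
\left(1-\tfrac{2}{\theta}\right)(1-\Lambda)\,\|\tilde\upsilon+\hat\upsilon^+\|^2\;\le\;I_0(\upsilon)\;<\;I_0(u)\;\le\;I_0(v)\;<\;\left(1-\tfrac{2}{\theta}\right)(1-\Lambda)\,R^2,
\]
so $(\mathcal{N}_{iv})$ holds; and $I_0(\upsilon)<I_0(u)\le I_\mu(v)<I_\mu(v)+1$ gives $(\mathcal{N}_{iii})$. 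No retroactive adjustment of $v$ or $R$ is needed: the strict inequality coming from the contradiction hypothesis does all the work, with the \emph{existing} choices of $v$ and $R$.

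Your proposed fix --- enlarging $v$ and $R$ at the outset so that \emph{every} nodal $z_1$ and positive $z_2$ yield $z\in\nmun$ --- does not work as written: nodal and positive solutions on $\ot$, $\ohat$ can have arbitrarily large energy, so no fixed $v,R$ accommodates them all, and ``taking the infimum over $z_1,z_2$'' cannot be done inside a single $\nmun$. You would have to restrict to near-minimal $z_1,z_2$, and once you do that you are back to competitors with $I_0(z)\le I_0(u)$, for which the paper's chain of inequalities above already gives membership in $\nmun$ with the original $v,R$. So the bookkeeping detour is both unnecessary and, in the form you stated it, incorrect; the substance of your argument is otherwise fine and matches the paper.
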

\begin{proof} 
Suppose $\upsilon\in\hozo$ is
such that $\upsilon|_{\Omega^+}$
a solution of (\ref{one}), 
$\upsilon|_{\ob}$ and $\upsilon|_{\Omega^-}$ are zero,
$\upsilon|_{\ot}$ is nodal,
$\upsilon|_{\ohat}$ is positive,
and either 
$$
I_0\left(\upsilon|_{\ot}\right)<I_0\left(u|_{\ot}\right)\quad\mbox{or}
\quad I_0\left(\upsilon|_{\ohat}\right)<I_0\left(u|_{\ohat}\right).
$$
Because $\Omega$, $\ot$ and $\ohat$ are Lipschitz,
$\tilde{\upsilon}$ coincides with $\upsilon|_{\ot}$ in $\ot$,
and $\hat{\upsilon}$ coincides with $\upsilon|_{\ohat}$ in $\ohat$.
Without loss of generality, we may also assume 
$$
I_0\left(\upsilon|_{\ot}\right)\leq I_0\left(u|_{\ot}\right)\quad\mbox{and}
\quad I_0\left(\upsilon|_{\ohat}\right)\leq I_0\left(u|_{\ohat}\right).
$$
Multiplying both sides of (\ref{one}) by $\tilde{\upsilon}^+$ and integrating,
by $\tilde{\upsilon}^-$ and integrating,
and by $\hat{\upsilon}^+$ and integrating, we find
$I_0'(\upsilon)\left(\tilde{\upsilon}^+\right)=
I_0'(\upsilon)\left(\tilde{\upsilon}^-\right)=
I_0'(\upsilon)\left(\hat{\upsilon}^+\right)=0
$.
Note that
$$
\left(1-\frac{2}{\theta}\right)(1-\Lambda)\|\tilde{\upsilon}+\hat{\upsilon}^+\|^2\leq I_0(\upsilon)<
I_0(u)\leq\left(1-\frac{2}{\theta}\right)(1-\Lambda)R^2.
$$
The function $\upsilon\in\nmu$ for all $\mu$. From (\ref{key}) we arrive
at the contradiction
$$
I_0(\upsilon)<I_0(u)=\lim I_{\mu_n}\left(u_{\mu_n}\right)=
\lim\min\left.I_{\mu_n}\right|_{{\cal{N}}_{\mu_n}}.
$$
Therefore,
$$
I_0\left(\upsilon|_{\ot}\right)\geq I_0\left(u|_{\ot}\right)\quad\mbox{and}
\quad I_0\left(\upsilon|_{\ohat}\right)\geq I_0\left(u|_{\ohat}\right).
$$
We have proved Claim~\ref{last}.
\end{proof}
\noindent The proof of Theorem~\ref{thm} is complete.
\end{altproof}

\section{Appendix}\label{section5}

\setcounter{equation}{0}
In this Appendix we give a direct proof of Lemma~\ref{L_one}
and of Claim~\ref{claim2}. 

\begin{altproof}{{\bf Lemma~\ref{L_one}}}
Let $\delta>0$, $\zeta$, $\varsigma$ be such that
$$\textstyle
\frac{1}{p}=\frac{\zeta}{\vartheta}+\frac{1-\zeta}{2^*},\quad\quad
\frac{1}{p}=\frac{\varsigma}{2}+\frac{1-\varsigma}{2^*},
$$
$$
\hat{C}=\textstyle\left(\frac{c_{2^*}^{p(1-\varsigma)}}
{2C_T^{p\varsigma}R^{p(1-\varsigma)}}\right)^{2/\varsigma},
$$
$$\overline{\delta}=\textstyle\frac{\hat{C}^{1/p}{\delta^{2/(p\varsigma)}}}{2R^2}
\quad\mbox{and}\quad
\hat{\delta}=\textstyle
\left(
\frac{c_{2^*}^{p(1-\zeta)}}{R^{p(1-\zeta)}}
\frac{\hat{C}^{1/2}}{2^{p/2} C_{\overline{\delta}}^{p/2}}
\right)^{\vartheta/(p\zeta)}\delta^{\vartheta/(p\zeta\varsigma)}.
$$
The constants $C_T$ and $C_{\overline{\delta}}$ are defined below
and $\vartheta$ is as in (b).
First we derive an estimate for the norm of $\bu$ on $L^p(\Omega^-)$.
Consider the set
$$
S_1:=\left\{x\in\Omega^-:\:\mbox{dist}\,(x,\R^N\setminus\Omega^-)<{1}/{n_1}\right\},
$$
where $n_1$ is large enough so that 
$$
|S_1|\leq\textstyle\left(\frac{c_{2^*}^{\vartheta}\hat{\delta}}{3R^\vartheta}\right)^{2^*/(2^*-\vartheta)}.
$$
Using the H\"older inequality, in the first place we note
\begin{equation}\label{part1}
\int_{S_1}|\bu|^\vartheta\leq|\bu|_{2^*}^\vartheta|S_1|^{(2^*-\vartheta)/2^*}
\leq\frac{\|\bu\|^\vartheta}{c_{2^*}^{\vartheta}}
|S_1|^{(2^*-\vartheta)/2^*}\leq\frac{\hat{\delta}}{3}.
\end{equation}
Let $\beta>0$ be a constant such that $\am\geq\beta$ on $\Omega^-\setminus S_1$.
Consider now
$$
S_2:=\left\{x\in\Omega^-:\:|\bu(x)|\leq{\textstyle\left(\frac{\hat{\delta}}{3|\Omega|}\right)^{1/\vartheta}}\right\}.
$$
In the second place we note
\begin{equation}\label{part2}
\int_{S_2}|\bu|^\vartheta\leq\frac{\hat{\delta}}{3}.
\end{equation}
Let $c_{\hat{\delta}}>0$ be a constant such that 
$$
G(\xx u)\geq c_{\hat{\delta}}|u|^\vartheta,\qquad\mbox{for}\ x\in\Omega^-\setminus S_1\ 
\mbox{and}\ |u|\geq {\textstyle\left(\frac{\hat{\delta}}{3|\Omega|}\right)^{1/\vartheta}}.
$$
The existence of such a constant is implied by (b).
In the third place we note that
\begin{eqnarray*}
\iu(v)+1+\frac{\lambda}{2}\int\ap u^2+\int\ap F(\x u)&\geq&\mu\int_{\Omega^-\setminus(S_1\cup S_2)}\am G(\x u)\\
&\geq&\mu\beta c_{\hat{\delta}}
\int_{\Omega^-\setminus(S_1\cup S_2)}|\bu|^\vartheta,
\end{eqnarray*}
and so, for $\mu\geq\mu_\delta:=\textstyle\frac{3(\iu(v)+1+C)}{\beta c_{\hat{\delta}}\hat{\delta}}$,
where $C$ is such that $\frac{\lambda}{2}\int\ap u^2+\int\ap F(\x u)$ $\leq C$,
\begin{equation}\label{part3}
\int_{\Omega^-\setminus(S_1\cup S_2)}|\bu|^\vartheta\leq\frac{\hat{\delta}}{3}.
\end{equation}
Combining (\ref{part1}), (\ref{part2}) and (\ref{part3}),
$$
\int_{\Omega^-}|\bu|^\vartheta\leq\hat{\delta},
$$
for $\mu\geq\mu_\delta$. Interpolating the $L^p(\Omega)$ norm between
the $L^\vartheta(\Omega)$ and the $L^{2^*}(\Omega)$ norms,
\begin{eqnarray}
\int_{\Omega^-}|\bu|^p&\leq&\left(\int_{\Omega^-}|\bu|^\vartheta\right)^{p\zeta/\vartheta}
|\bu|_{2^*}^{p(1-\zeta)}\nonumber\\
&\leq&\hat{\delta}^{p\zeta/\vartheta}\frac{R^{p(1-\zeta)}}{c_{2^*}^{p(1-\zeta)}}\nonumber\\
&=&\frac{\hat{C}^{1/2}}
{2^{p/2}C_{\overline{\delta}}^{p/2}}\delta^{1/\varsigma}\leq\frac{\delta}{2}\label{part4}
\end{eqnarray}
for small $\delta$, and $\mu\geq\mu_\delta$.
Now we turn to the estimate for the norm of $\bu$ on $L^p(\Omega^+)$.
Let $q$ be the trace exponent $q=2(N-1)/(N-2)$.
If $N=1$ or 2 we take $q$ to be 
greater than 2.
There exists $C_{\overline{\delta}}$ such that
$$
\left(\int_{\partial\Omega^-}|\bu|^q\right)^{2/q}\leq
\overline{\delta}\int_{\Omega^-}|\nabla\bu|^2+
C_{\overline{\delta}}\left(\int_{\Omega^-}|\bu|^{p}\right)^{2/p}.
$$
This follows from \cite[bottom of p.\ 112]{A}.
From the expression for $\overline{\delta}$ and (\ref{part4}),
$$
\left(\int_{\partial\Omega^-}|\bu|^q\right)^{2/q}\leq
\hat{C}^{1/p}\delta^{2/(p\varsigma)},
$$
for $\mu\geq\mu_\delta$.
Using \cite[inequality (7.28) on p.\ 203]{LM},
\begin{eqnarray*}
\left(\int_{\Omega^+}|\bu|^2\right)^{1/2}&\leq&C\|\bu\|_{H^{-1/2}(\partial\Omega^+)}\\ &\leq&
C\|\bu\|_{L^{2}(\partial\Omega^+)}\\ &\leq& C_T\|\bu\|_{L^{q}(\partial\Omega^+)}\\
&=&
C_T\|\bu\|_{L^{q}(\partial\Omega^-)}\\
&\leq&C_T\hat{C}^{1/(2p)}\delta^{1/(p\varsigma)},
\end{eqnarray*}
for $\mu\geq\mu_\delta$.
This implies
\begin{equation}\label{part5}
\int_{\Omega^+}|\bu|^p
\leq\left(\int_{\Omega^+}|\bu|^2\right)^{p\varsigma/2}
|\bu|_{2^*}^{p(1-\varsigma)}
\leq C_T^{p\varsigma}\hat{C}^{\varsigma/2}\delta
{\textstyle\left(\frac{R}{c_{2^*}}\right)^{p(1-\varsigma)}}=
\frac{\delta}{2},
\end{equation}
for $\mu\geq\mu_\delta$.
Inequalities (\ref{part4}) and (\ref{part5}) together finally give
$$
|\bu|_p^p\leq\delta,
$$
for $\mu\geq\mu_\delta$.
\end{altproof}
\begin{altproof}{{\bf Claim~\ref{claim2}}}
Consider
$$ 
\varepsilon_1=\textstyle\left(\frac{\kappa_2}{4\sup\ap|\Omega|}\right)^{1/2}
$$ 
and $\str$ as in (\ref{SS3}). From (\ref{S3}),
$$
\int_{\{x\in\Omega^+:\:|w(x)|\geq\varepsilon_1\}\setminus \str}\ap w^2
\geq\frac{\kappa_2}{2}-\sup\ap\varepsilon_1^2|\Omega|
=\frac{\kappa_2}{4},
$$
for large $\mu$.
Let 
$$
M=\textstyle\left(\frac{8R^{2^*}\sup\ap}{\kappa_2 c_{2^*}^{2^*}}\right)^{1/(2^*-2)}.
$$
Since, by Chebyshev's inequality,
$$
\left|\{x\in\Omega^+:\:|w(x)|>M\}\right|\leq\frac{|w|_{2^*}^{2^*}}{M^{2^*}}\leq
\frac{R^{2^*}}{c_{2^*}^{2^*}M^{2^*}},
$$
we have
\begin{eqnarray*}
\int_{\{x\in\Omega^+:\:|w(x)|>M\}}\ap w^2&\leq&\sup\ap
|w|_{2^*}^2\left|\{x\in\Omega^+:\:|w(x)|>M\}\right|^{(2^*-2)/2^*}\\
&\leq&\sup\ap\frac{R^{2^*}}{c_{2^*}^{2^*}M^{2^*-2}}\\
&=&\frac{\kappa_2}{8}.
\end{eqnarray*}
Choosing $\str$ as in (\ref{SS3}), 
$$
\int_{\{x\in\Omega^+:\:\varepsilon_1\leq|w(x)|\leq M\}\setminus \str}\ap w^2\geq\frac{\kappa_2}{4}-\frac{\kappa_2}{8}=
\frac{\kappa_2}{8},
$$
for large $\mu$.
Let $c_f^\prime>0$ be such that 
$f^\prime(\xx w)-\frac{f(\xx w)}{w}\geq c_f^\prime$ for $|w|\in[\eta\varepsilon_1,M/\eta]$
and $x\in\Omega^+\setminus \str$.
Then $\g'(t;w)\geq\eta c_f^\prime\kappa_2/8$ for $t\in[\eta,1/\eta]$ and $\mu$ large.
This proves Claim~\ref{claim2}.
\end{altproof}

\end{document}